\theoremstyle{plain}
\newtheorem{theorem}{Theorem}[section]
\newtheorem{lemma}[theorem]{Lemma}
\newtheorem{lem}[theorem]{Lemma}
\newtheorem{proposition}[theorem]{Proposition}
\newtheorem{corollary}[theorem]{Corollary}
\newtheorem{defn}[theorem]{Definition}
\newtheorem{rmk}[theorem]{Remark}
\theoremstyle{plain}
\newcommand{\supp}{\text{supp$\!$ }}
\newcommand{\asupp}{\textit{$\mathcal A$supp$\!$ }}
\newcommand{\dist}{\text{dist}}
\newcommand{\be}{\begin{equation}}
\newcommand{\en}{\end{equation}}
\newcommand{\ee}{\end{equation}}
\newcommand{\beal}{\begin{align}}
\newcommand{\eal}{\end{align}}
\title[Fractal Strichartz estimate]
{Fractal Strichartz estimate for  \\ the wave equation}
\author{Chu-Hee Cho}
\author{Seheon Ham}
\author{Sanghyuk Lee}
\address{School of Mathematical Sciences \\
	Seoul National University \\
	Seoul 08826,
 Republic of Korea}
\email{akilus@snu.ac.kr}
\address{School of Mathematics\\
Korea Institute for Advanced Study	 \\
Seoul 02455, Republic of Korea}
\email{hamsh@kias.re.kr}
\address{School of Mathematical Sciences \\
	Seoul National University \\
	Seoul 08826, Republic of Korea}
\email{shklee@snu.ac.kr}
\thanks{This work is supported  by  NRF-2015R1A4A1041675 and NRF- 2015R1A2A2A05000956 (South Korea).}
\keywords{wave equation, Strichartz estimate, general measure}
\begin{document}
\begin{abstract}
We consider Strichartz estimates for the wave equation with
respect to general measures which satisfy certain growth conditions. In  $\mathbb
R^{3+1}$  we obtain the sharp estimate and in higher dimensions improve the previous results.
\end{abstract}
\maketitle

\section{Introduction}

Let us consider the wave equation in $\mathbb R^{n} \times \mathbb R$:
\begin{equation}\label{wveq}\begin{cases}
        \partial_t^2 u -\Delta u=0, \\
      u(x,0)=f,~\,\partial_tu(x,0)=g.
        \end{cases}
\end{equation} 
The space-time estimate for the solution  of \eqref{wveq}  which is called
\emph{Strichartz estimate}  has been proven to be an important tool
in studies of various problems. (See \cite{st, pe, km, ls, MS, kt}.) 
It is well-known that the estimate
\begin{equation}\label{stri}
      \|u\|_{L_t^q(\mathbb R,\, L_x^r(\mathbb R^{n}))}\lesssim\|f\|_{\dot H^s} +
      \|g\|_{\dot H^{s-1}}
      \end{equation}
      holds for $s\ge 0, 2 \le q , r <\infty$ 
      which satisfy
      \[ 
      \frac1q+\frac{n}r=\frac{n}2-s,\,\, \frac1q+\frac{n-1}{2r} \le
      \frac{n-1}4.\]
Here $\dot H^s$ is the homogeneous $L^2$ Sobolev space of order $s$.  See  \cite{FW} for the estimates with $r=\infty$. 
It was Strichartz \cite{st} who first proved the estimate \eqref{stri} when $q=r$. This  was later extended to mixed norm estimates by Pecher \cite{pe}. (Also see \cite{GV}.)
The endpoint cases $(r,q) = (2(n-1)/(n-3),2)$ except $n=3$ were obtained by Keel--Tao \cite{kt}. Klainerman and Machedon \cite{km} showed the failure of \eqref{stri} when $(n,r,q)=(3,\infty,2)$.

\

In this note we consider a generalization of \eqref{stri} 
by replacing the Lebesgue measure with general measure $\mu$. 
More precisely, we  study  the estimate
\begin{eqnarray}\label{fstrichartz}
\|u\|_{L^q(d\mu)}\lesssim\|f\|_{H^s} + \|g\|_{H^{s-1}}.
\end{eqnarray}
Here we denote by $H^s(\mathbb R^{n})$  
the inhomogeneous  $L^2$ Sobolev space of order $s$, which is the space of all tempered distributions $f$ such that $(1 + |\cdot|^2)^{\frac s2} \widehat f \in L^2(\mathbb R^n)$, equipped with the norm 
\[
\|f\|_{H^s(\mathbb R^n) } = \| (1 + |\cdot|^2)^{\frac s2} \widehat f \|_{L^2(\mathbb R^n)}. 
\] 
This kind of estimates  was studied in connection  with problems in geometric measure theory, precisely, the sphere packing problem  (see \cite{Mi, w, ob, ob2}).

Throughout this paper, the measure $\mu$ is assumed to be a nonnegative Borel
regular measure with compact support in $\mathbb R^{n+1}$.
Let us denote  by $\mathfrak M(\mathbb R^{n+1})$  the space of such measures.
In addition we impose uniform growth condition on $\mu$ as follows.

\begin{defn} Let $\alpha\in (0, n+1]$.  For $\mu \in \mathfrak M(\mathbb R^{n+1})$, we say that $\mu$ is $\alpha$-dimensional if there exists a constant $C_\mu$, independent of $x$ and $\rho$, such that
\begin{equation}\label{def-measure} 
\mu(B(x,\rho))\le C_\mu \rho^\alpha \quad\textrm{ for all } x\in \mathbb R^{n+1},\, \rho>0.
\end{equation} 
Here $B(x,\rho)$ denotes the open ball of radius $\rho$ centered at $x$. 
Also we define 
\[
\langle \mu\rangle_\alpha = \sup_{x\in \mathbb R^{n+1},\,\rho>0} \rho^{-\alpha}\mu(B(x,\rho)).
\] 
\end{defn}


For $1\le q\le \infty$
let us set
\begin{equation}\label{necessary}
s(\alpha,q,n)=\begin{cases}
\max \{  \frac{n}2-\frac{\alpha}q ,\,
\frac{n+1}4
\}, \,\, \,\, &\text{ if }{0}<
\alpha \le 1,  \\
\max \{  \frac{n}2-\frac{\alpha}q ,\,
\frac{n+1}4 +\frac{1-\alpha}{2q},\,~~~~~~
\frac{n+2}{4}-\frac{\alpha}{4} \}, \,\, \,\, &\text{ if } {1}<
\alpha \le n,  \\
\max \{  \frac{n}2-\frac{\alpha}q ,\,
\frac{n+1}4+\frac{n+1-2\alpha}{2q},\,\frac{n+1}2-\frac{\alpha}{2}
\},\, &\text{ if } n< \alpha \le n+1.
\end{cases}
\end{equation}
 When $n=2$ 
 Wolff \cite{w} showed that \eqref{fstrichartz}
holds for $\alpha$-dimensional measure $\mu$ if $s>\max(\frac34, 1-\frac{\alpha}4, 1-\frac{\alpha}q)$, $\alpha \in (1,3)$.
Erdo$\breve{\textrm g}$an \cite{er3} improved Wolff's result so that
\eqref{fstrichartz} holds for $s>s(\alpha,q,2)$, $\alpha\in(1,3)$  
and also showed that \eqref{fstrichartz}  generally
fails  if $s<s(\alpha,q,2)$.  
When $n\ge3$, 
Oberlin \cite{ob} obtained \eqref{fstrichartz} for $
\alpha\in (1,n+1)$ provided that $q<\alpha$ and $s>\frac{n-1}2$. 

\

It  is plausible to conjecture that \eqref{fstrichartz} holds if $s> s(\alpha, q,n)$ (see  Proposition \ref{prop:necessary}) but  like other open problems of similar nature  complete resolution seems out of reach at this moment. However, for $n=3$ and $\alpha \in[1,4]$, we obtain the sharp estimate by the following theorem and Proposition \ref{prop:necessary}. 
\begin{theorem}\label{wave} 
Let $n=3$. 
 Also let $\mu$ be  an $\alpha$-dimensional measure. 
Suppose that $u$ is a solution to the equation \eqref{wveq}.
Then \eqref{fstrichartz} holds with 
\begin{equation}\label{scon3}
s> 
\begin{cases}
s(\alpha,q,3), \, \text{ if }\, 2 \le q \le \infty,\\ 
s(\alpha,2,3), \, \text{ if }\, 1 \le q \le 2.
\end{cases}
\end{equation}
Furthermore, the implicit constant in \eqref{fstrichartz} does not depend on particular choice of  $\mu$ as long as $\langle\mu\rangle_\alpha$ is uniformly bounded. 
\end{theorem}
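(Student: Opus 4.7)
The strategy is first to make standard reductions to a frequency-localized unit-scale problem, then to prove three extremal unit-frequency bounds corresponding to the three terms defining $s(\alpha,q,3)$, and finally to interpolate between them.

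I first decompose $f = \sum_{\lambda\ge 1}f_\lambda$ (and $g$ similarly) via Littlewood--Paley with $\widehat{f_\lambda}$ supported in $\{|\xi|\sim\lambda\}$, reducing \eqref{fstrichartz} to the frequency-localized half-wave estimate
\begin{equation*}
\|e^{it\sqrt{-\Delta}}f_\lambda\|_{L^q(d\mu)}\lesssim \lambda^{s(\alpha,q,3)}\|f_\lambda\|_{L^2},
\end{equation*}
with constant depending only on $\langle\mu\rangle_\alpha$. Minkowski's inequality (for $q\ge 2$) and summing a geometric series give \eqref{fstrichartz} for any $s > s(\alpha,q,3)$; the range $1\le q\le 2$ reduces to $q=2$ by H\"older's inequality, using compactness of $\supp\mu$. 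The parabolic rescaling $(x,t)\mapsto(\lambda x,\lambda t)$ sends $\mu$ to $\mu_\lambda$ with $\langle\mu_\lambda\rangle_\alpha=\lambda^{-\alpha}\langle\mu\rangle_\alpha$ and converts the frequency-$\lambda$ bound into a unit-frequency estimate, the polynomial dependence on $\langle\mu_\lambda\rangle_\alpha$ reproducing the exponent $s(\alpha,q,3)$ upon unscaling.

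At unit frequency, three extremal bounds drive the three terms in \eqref{necessary}. The Bernstein inequality $\|e^{it\sqrt{-\Delta}}\tilde f\|_{L^\infty(\mathbb R^4)}\lesssim\|\tilde f\|_{L^2}$ combined with H\"older against $d\mu_\lambda$ yields the sharp-scaling term $\tfrac{n}{2}-\tfrac{\alpha}{q}$. The term $\tfrac{n+2-\alpha}{4}$ at $q=2$ is obtained via $TT^*$: $\|e^{it\sqrt{-\Delta}}\tilde f\|_{L^2(d\mu_\lambda)}^2$ equals a quadratic form whose kernel is $\widehat{\mu_\lambda}$ evaluated on the difference set of the cone, and this is controlled by a Frostman-type bound on the spherical averages of $\widehat{\mu_\lambda}$ afforded by the $\alpha$-dimensional hypothesis. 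Finally, the intermediate term $\tfrac{n+1}{4}+\tfrac{1-\alpha}{2q}$ is produced by a bilinear cone restriction estimate against the fractal measure $\mu_\lambda$, obtained by a dyadic angular decomposition of the light cone in $\mathbb R^4$ and exploitation of the transversality between separated caps together with the sharp Tomas--Stein $L^2\to L^4$ endpoint on the cone. Analogous extremal bounds handle the regimes $\alpha\le 1$ and $n<\alpha\le n+1$, with the same interpolation scheme.

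The main obstacle is establishing the bilinear fractal cone estimate cleanly: straightforward interpolation between the $L^2(d\mu_\lambda)$ bound and the trivial $L^\infty$ bound alone gives a weaker exponent than the conjectured $\tfrac{n+1}{4}+\tfrac{1-\alpha}{2q}$, so the bilinear step genuinely requires using the curvature of the cone in $\mathbb R^4$ through a Wolff--Tao-type angular decomposition adapted to the $\alpha$-dimensional measure. Once the three extremal bounds are in hand, the final complex interpolation and the Littlewood--Paley summation are routine: the strict inequality $s > s(\alpha,q,3)$ provides the slack $\epsilon$ needed to absorb logarithmic losses.
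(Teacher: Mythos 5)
Your high-level architecture matches the paper's: Littlewood--Paley reduction to a half-wave estimate at frequency $\lambda$, restatement as a Fourier extension bound from the truncated cone $\Gamma_R(1)$ against $d\mu$, reduction of $1\le q\le 2$ to $q=2$ by H\"older, and a bilinear/Whitney angular decomposition to drive the intermediate exponents. However, there are two genuine gaps at the heart of the argument.

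First, your proposed $q=2$ endpoint is circular. You suggest obtaining the $\tfrac{n+2-\alpha}{4}$ term at $q=2$ by a $TT^*$ argument controlled by ``a Frostman-type bound on the spherical averages of $\widehat{\mu}$.'' But the $\alpha$-dimensional hypothesis alone gives only the elementary average decay $\int_{S^{n}}|\widehat\mu(R\xi)|^2 d\sigma(\xi)\lesssim R^{-\min\{\alpha,\,\cdot\}}$, which is strictly weaker than what is needed; the sharp spherical average decay \eqref{energy} is a \emph{consequence} of Theorem~\ref{wave} (as the paper notes), not an independently available input. So feeding it into $TT^*$ assumes the theorem you are trying to prove. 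The paper instead extracts the $q=2$ endpoint from the bilinear estimate, namely $\beta > (3n+1-2\alpha)/8$ in Theorem~\ref{bifrac}, which for $n=3$ equals $(n+2-\alpha)/4$.

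Second, and more fundamentally, your sketch names but does not supply the crucial mechanism: a bilinear restriction estimate for the cone \emph{against a fractal measure}. Linear or bilinear Tomas--Stein $L^2\to L^4$ on the cone is not strong enough; what is needed is the Wolff--Tao bilinear cone estimate at exponent $p>(n+3)/(n+1)$, and more than that, its adaptation so that all constants track $\langle\mu\rangle_\alpha$. This requires (i) an induction-on-scales scheme in which localization of the estimate to a $\rho$-ball gains a factor $\rho^{2\alpha/q+2\beta-n}$ (the paper's Lemma~\ref{localization}, which exploits the pseudo-dilation structure of $\alpha$-dimensional measures); (ii) a wave-packet decomposition together with the $\mathbf q/\not\sim\mathbf q$ splitting and the $L^2$ off-diagonal bound at gain $R^{(n-1)/4}$ (Lemma~\ref{wavepacket}); and (iii) the kernel convolution bound $\|\phi_R*d\mu\|_r\lesssim\langle\mu\rangle_\alpha R^{(n+1-\alpha)(1-1/r)}$ (Lemma~\ref{fractal}) to convert the $L^2(\mathbf q)$ bilinear bounds into $L^{q/2}(d\mu)$ bounds. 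None of these steps is routine, and ``Wolff--Tao-type angular decomposition adapted to the $\alpha$-dimensional measure'' is precisely where the work lies. In addition, passing from the bilinear estimate to the linear one via the Whitney decomposition requires tracking how $\langle\mu\rangle_\alpha$ transforms under the \emph{anisotropic} rescaling of a $2^{-j}$-sector of the cone (the paper's Lemma~\ref{rescaling}); the dependence is case-split on $\alpha\lessgtr 1$ and $\alpha\lessgtr n$ and produces exactly the three branches of $s(\alpha,q,n)$. This is absent from your sketch as well. Finally, a smaller point: the rescaling $(x,t)\mapsto(\lambda x,\lambda t)$ sends $\supp\mu$ to a ball of radius $\lambda$, so the ``unit-frequency'' estimate you need would concern measures supported on arbitrarily large sets, which is not obviously a simplification; the paper keeps $\mu$ in the unit ball and carries the $R^s$ factor instead.
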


For $\delta >0$ and $R \gg 1$, let us define the truncated cone  $\Gamma_R$   by 
 \[\Gamma_R=\{\xi=(\xi',\xi_{n+1})\in \mathbb R^{n+1}: |\xi'|=\xi_{n+1},
R\le \xi_{n+1}\le 2R \}\] and its $\delta$-neighborhood  by
\[\Gamma_R(\delta)=\{\xi\in \mathbb R^{n+1}: \dist (\xi, \Gamma_R)<\delta  \}.\]
Note that the space-time Fourier transform of $u$ is supported in
the forward and backward light cones. By reflection in frequency spaces, Littlewood-Paley decomposition, and Plancherel theorem,
Theorem \ref{wave} is a consequence of the following. 
(For details, see the end of Section \ref{sec:rescaling}.)

\begin{theorem}\label{thecone} 
 Let $n=3$ and $\mu$ be an $\alpha$-dimensional measure supported  in $\overline{B(0,1)}$.  
Then, for $f$ supported in $\Gamma_R(1)$ there exists a constant $C>0$ such that 
\begin{equation}
\label{frac} \|\widehat f\|_{L^q(d\mu)}\le C  {\langle\mu\rangle_\alpha^{\frac1q }}\, R^s\|f\|_2
\end{equation}
holds if $s$ satisfies \eqref{scon3}.
\end{theorem}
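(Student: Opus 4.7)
The plan is to recast \eqref{frac} as an extension-type estimate on the three-dimensional light cone in $\mathbb R^4$ and combine sharp bilinear restriction theory with the $\alpha$-dimensional hypothesis on $\mu$. First, I would apply the parabolic dilation adapted to the cone to transfer \eqref{frac} from $\Gamma_R(1)$ to a $\delta$-neighborhood of the unit cone with $\delta = R^{-1}$, pushing forward $\mu$ to an $\alpha$-dimensional measure on a ball of radius $R$ whose growth constant rescales cleanly. When $q \le 2$, H\"{o}lder's inequality against the compactly supported measure reduces \eqref{frac} to the case $q = 2$, so I may restrict attention to $q \ge 2$.

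The crucial endpoint is $s = (5-\alpha)/4$ at $q = 2$ for $1 < \alpha \le 3$. Following the Tao--Vargas--Vega scheme, I decompose $f$ via a Whitney decomposition of $\Gamma_1 \times \Gamma_1$ into transversal pairs of $\delta^{1/2}$-caps, and on each pair apply the sharp bilinear cone-restriction theorem of Wolff and Tao in $\mathbb R^4$, combined with parabolic rescaling on each cap. The bilinear product $\widehat{f_1}\widehat{f_2}$ has Fourier support of diameter $O(1)$, hence is essentially constant on unit balls, so the bound $\mu(B_1) \lesssim \langle\mu\rangle_\alpha$ together with a covering argument at unit scale allows passage from the Euclidean $L^p(dx)$ bilinear estimate to an $L^{q/2}(d\mu)$ bilinear estimate, paying a power of $R$ dictated by $\alpha$. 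Summing the transversal Whitney pieces using orthogonality of distinct wave-packet families in $L^2(d\mu)$ then yields the endpoint.

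The remaining exponents in \eqref{scon3} for $q \ge 2$ follow by Riesz--Thorin interpolation between this $L^2(d\mu)$ endpoint and the trivial $L^\infty$ bound $\|\widehat f\|_\infty \le \|f\|_1 \lesssim R^{3/2}\|f\|_2$ (from Cauchy--Schwarz and $|\Gamma_R(1)| \sim R^3$), with the Knapp-type exponent $3/2 - \alpha/q$ produced by a packing example on the cone. For $3 < \alpha \le 4$ the additional exponent $2 - \alpha/2$ in \eqref{necessary} arises from a direct $L^2$-Plancherel-type argument that exploits the frequency support of $f$ together with the $\alpha$-dimensional growth, essentially a discretized restriction bound at the critical dimension.

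The main obstacle is deriving the sharp $L^2(d\mu)$ endpoint. A naive interpolation between the Euclidean Wolff--Tao bilinear bound and a trivial $L^\infty$ bound falls short of the threshold $(5-\alpha)/4$; one must exploit the frequency localization of the bilinear product together with the $\alpha$-dimensional growth of $\mu$ at every dyadic scale, and arrange the Whitney orthogonality in a $\mu$-weighted norm rather than a Euclidean $L^p$. Absorbing the $\delta^{-\varepsilon}$ loss in the bilinear theorem (via an $\varepsilon$-removal argument or interpolation with a nearby estimate with small loss), and carefully handling the regime transitions at $\alpha = 1$ and $\alpha = 3$ where the dominant exponent in \eqref{necessary} switches, are the additional technical hurdles.
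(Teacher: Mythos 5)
Your high-level picture (Whitney decomposition of the cone into transversal cap pairs, bilinear restriction, exploitation of $\alpha$-dimensional growth, separate treatment of the $q\le 2$ and $\alpha>n$ regimes) matches the paper's strategy, and you correctly flag that a naive combination of the Euclidean Wolff--Tao bilinear bound with local constancy ``falls short'' of the sharp endpoint. However, that flag is precisely where the proof has a genuine gap: you identify the missing step but do not supply it, and the devices you gesture at would not produce it.

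The paper does \emph{not} pass from the Euclidean bilinear estimate to $L^{q/2}(d\mu)$ by local constancy. Instead it proves a genuinely new \emph{fractal bilinear estimate} (Theorem~\ref{bifrac}): for transversal $f,g$ supported in $\Gamma_R(1)$ and any $\alpha$-dimensional $\mu$,
\[
\Big(\int |\widehat f\,\widehat g|^{q/2}\,d\mu\Big)^{2/q} \lesssim \langle\mu\rangle_\alpha^{2/q}\,R^{2\beta}\,\|f\|_2\|g\|_2,
\qquad \beta>\max\Big\{\tfrac n2-\tfrac\alpha q,\ \tfrac{3n+1-2\alpha}{8}\Big\}.
\]
This is established by a self-contained induction on scales (Lemma~\ref{localization} for the gain under localization to $\rho$-balls, Lemma~\ref{wavepacket} for the Wolff--Tao wave-packet decomposition at scale $R^{1-\delta}$, and Lemma~\ref{fractal}, which converts the off-diagonal $L^2(\mathbf q)$ bounds into $L^{q/2}(d\mu)$ bounds via $\|\phi_R\ast d\mu\|_\infty\lesssim \langle\mu\rangle_\alpha R^{n+1-\alpha}$). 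The $\alpha$-dimensional hypothesis enters at \emph{every} iteration, not as a one-shot covering at unit scale, which is why the exponent $\tfrac{3n+1-2\alpha}{8}$ (equal to $\tfrac{n+2-\alpha}{4}$ when $n=3$, i.e.\ $\tfrac{5-\alpha}{4}$) emerges. Your proposal has no mechanism for producing this exponent.

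Two further points would need repair. First, summing the Whitney pieces via ``orthogonality of distinct wave-packet families in $L^2(d\mu)$'' is not available: for a general singular measure there is no reason wave packets are orthogonal against $d\mu$. The paper instead uses the Fourier-support disjointness of the $f^j_k$ to get $\sum_k\|f^j_k\|_2^2\le\|f\|_2^2$ on the \emph{frequency} side, together with Schwarz in $(k,k')$, after first bounding each bilinear piece. Second, and more importantly, applying Theorem~\ref{bifrac} to a pair of caps of angular size $2^{-j}$ requires the anisotropic Lorentz rescaling $T_j:(\xi'',\sigma,\tau)\mapsto(2^j\xi'',\sigma,2^{2j}\tau)$, and one must track how $\langle\mu\rangle_\alpha$ transforms under $T_j$; the paper's Lemma~\ref{rescaling} shows $\langle\mu_j\rangle_\alpha\lesssim \min\{2^{(n+1-2\alpha)j},2^{(1-\alpha)j},1\}\langle\mu\rangle_\alpha$, and these three cases are exactly the source of the piecewise structure of $s(\alpha,q,n)$ in \eqref{necessary}. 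Your proposal mentions ``parabolic rescaling on each cap'' but does not address the transformation of the fractal constant, which is the crux of obtaining \eqref{scon3}.
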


\begin{rmk}
If $n=3$ and  $ \alpha \in [1, 4]$,  \eqref{frac} is  sharp because $s(\alpha,q,3)=s(\alpha,2,3)$ when $1 \le q \le2$. 
If $0 < \alpha <1$, we have $s(\alpha,2,3) > s(\alpha,q,3)$ for $1\le q \le 2$. 
\end{rmk}

In general,  $s(\alpha,q,n)$, $n \ge 3$  provides a lower bound on $s$  for \eqref{frac} to hold.

\begin{proposition}\label{prop:necessary}
Suppose that for any $\alpha$-dimensional measure $\mu$, there is a constant $C>0$ such that \eqref{frac} holds whenever $f$ is supported in $\Gamma_R(1)\subset \mathbb R^{n+1}$. 
Then 
$ s \ge s(\alpha,q, n) $ for $1\le q \le \infty$.
\end{proposition}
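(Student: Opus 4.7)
The proposition is a necessity statement, so I would prove it by constructing, for each summand $s^{*}$ appearing in the maximum defining $s(\alpha,q,n)$, a pair $(\mu_{R},f_{R})$ where $\mu_{R}$ is an $\alpha$-dimensional Borel measure on $\mathbb R^{n+1}$ and $f_{R}$ is supported in $\Gamma_{R}(1)$, such that
$\|\widehat{f_{R}}\|_{L^{q}(d\mu_{R})}/(\langle\mu_{R}\rangle_{\alpha}^{1/q}\|f_{R}\|_{2})\gtrsim R^{s^{*}}$.
The assumed inequality \eqref{frac} then forces $s\ge s^{*}$, and taking the maximum over all summands yields the claim. The counterexamples fall into three standard families.

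\emph{Low-frequency example.} For the term $s\ge n/2-\alpha/q$, set $f_{R}=\chi_{\Gamma_{R}(1)}$, so that $\|f_{R}\|_{2}\sim R^{n/2}$ and $|\widehat{f_{R}}|\sim R^{n}$ on the physical ball $B(0,R^{-1})$. Take $\mu_{R}$ to be an $\alpha$-Ahlfors regular (or Frostman) measure supported in $B(0,R^{-1})$ with $\mu_{R}(B(0,R^{-1}))\sim R^{-\alpha}$; then $\|\widehat{f_{R}}\|_{L^{q}(d\mu_{R})}\gtrsim R^{n-\alpha/q}$, giving the required bound.

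\emph{Knapp-type example.} For the $q$-dependent terms $(n+1)/4$ (when $\alpha\le 1$), $(n+1)/4+(1-\alpha)/(2q)$ (when $1<\alpha\le n$), and $(n+1)/4+(n+1-2\alpha)/(2q)$ (when $n<\alpha\le n+1$), let $C\subset\Gamma_{R}(1)$ be a maximal Knapp cap of dimensions $R\times R^{1/2}\times\cdots\times R^{1/2}\times 1$ and set $f_{R}=\chi_{C}$. Then $\|f_{R}\|_{2}\sim R^{(n+1)/4}$ and $|\widehat{f_{R}}|\sim R^{(n+1)/2}$ on a wave-packet tube $T$ of dimensions $R^{-1}\times R^{-1/2}\times\cdots\times R^{-1/2}\times 1$ lying along a null ray through the origin in spacetime. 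The measure $\mu_{R}$ is then adapted to $T$: $1$-dimensional Hausdorff measure on a unit segment along the long axis of $T$ when $\alpha\le 1$; $\alpha$-Hausdorff measure on an $\alpha$-dimensional affine plane through the long axis and $\alpha-1$ transverse directions when $1<\alpha\le n$ (yielding $\mu_{R}(T)\sim R^{-(\alpha-1)/2}$); or Lebesgue on $T$ itself with density $R^{n+1-\alpha}$ when $n<\alpha\le n+1$ (yielding $\mu_{R}(T)\sim R^{(n+1)/2-\alpha}$). A direct calculation matches each exponent.

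\emph{Focusing example.} Finally, for the $q$-independent terms $(n+2-\alpha)/4$ (in the range $1<\alpha\le n$) and $(n+1-\alpha)/2$ (in the range $n<\alpha\le n+1$), I would again take $f_{R}=\chi_{\Gamma_{R}(1)}$ and use the stationary-phase estimate
$|\widehat{\chi_{\Gamma_{R}(1)}}(x,t)|\sim R^{(n+1)/2}$
on the $R^{-1}$-neighborhood of the unit portion of the physical light cone $\{|x|=|t|\}$. I would then choose $\mu_{R}$ as an $\alpha$-Ahlfors regular measure supported on (or in a thin slab around) this light-cone neighborhood: the $n$-dimensional Hausdorff measure on the light cone when $\alpha=n$; a Lebesgue measure on an $R^{-1}$-slab about the light cone with density $R^{n+1-\alpha}$ when $\alpha>n$; or a product of $1$-dimensional measure along a null ray with an $(\alpha-1)$-dimensional Cantor-type measure in the transverse directions for general $\alpha$. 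This focusing step is the main obstacle I anticipate: for non-integer $\alpha$ and $q$ away from $2$, one must arrange the $\alpha$-Ahlfors regular support carefully—via a multiscale or product construction in the spirit of Erdo\u{g}an's argument for the $n=2$ case—so that $\langle\mu_{R}\rangle_{\alpha}$ remains uniformly bounded while the combined mass and amplitude of $\widehat{f_{R}}$ on its support produce the precise $q$-independent exponent rather than collapsing to a weaker Knapp-type lower bound.
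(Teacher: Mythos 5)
Your first two families of examples match the paper's Section~4 in spirit and in the resulting exponents: the low-frequency example gives $s\ge n/2-\alpha/q$ (the paper uses a fixed measure $\chi_{\overline{B(0,1)}}|x|^{\alpha-n-1}dx$ evaluated on $B(0,cR^{-1})$, you use an $R$-dependent Frostman measure on $B(0,R^{-1})$, but the computation is the same), and your Knapp-cap example reproduces the paper's second set of lower bounds, with the caveat that for non-integer $\alpha\in(1,n]$ ``$\alpha$-Hausdorff measure on an $\alpha$-dimensional affine plane'' is not literally a construction; the paper splices $\ell-1$ Lebesgue directions, one fractional direction with density $|x|^{\alpha-\ell}$, and $n+1-\ell$ delta directions, and you would need to do something equivalent.

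The third family --- the ``focusing example'' --- has a genuine gap, and I do not think it is a matter of arranging the measure carefully as you suggest. With $f_R=\chi_{\Gamma_R(1)}$ one has $\|f_R\|_2\sim R^{n/2}$ and $|\widehat{f_R}|\lesssim R^{(n+1)/2}$ everywhere outside the $R^{-1}$-ball at the origin; and for any $\alpha$-dimensional $\mu$ with $\langle\mu\rangle_\alpha\lesssim1$ supported in the unit ball one has $\mu(\mathbb R^{n+1})\lesssim1$, so the ratio $\|\widehat{f_R}\|_{L^q(d\mu)}/\|f_R\|_2$ is at most $\sim R^{1/2}$ regardless of how you choose $\mu$. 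This can only prove $s\ge1/2$, which equals the target $(n+2-\alpha)/4$ exactly at $\alpha=n$ and is strictly too weak for $1<\alpha<n$. For $n<\alpha\le n+1$ your computation gives $s\ge1/2+(n-\alpha)/q$, which coincides with the target $(n+1-\alpha)/2$ only at $q=2$ and is weaker for $1\le q<2$ (since $n-\alpha<0$). The paper's construction for the $q$-independent term is fundamentally different: following Erdo\u{g}an (after Wolff), it takes a single Knapp cap bump $\phi_P$ (so $\|F\|_2\sim R^{(n+1)/4}$, not $R^{n/2}$), modulates it by a lattice of phases $e^{iv_k\cdot\xi''}$ (and, for $\alpha>n$, also $e^{iw_j\eta_{n+1}}$) normalized by $N^{-1/2}$, so that $\widehat F$ has amplitude $\gtrsim N^{-1/2}R^{(n+1)/2}$ on the disjoint union $S=\bigcup_k(P^*+v_k)$, and then chooses the density $R^{(n+2-\alpha)/2}\chi_S\,dx$ to make $\mu(S)\sim1$ while $\langle\mu\rangle_\alpha\lesssim1$. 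The essential gains over your proposal are the much smaller $L^2$-norm of the input and the fact that the large set $S$ is a union of many widely-separated thin boxes rather than a neighbourhood of the light cone; neither can be recovered by only changing the measure while keeping $f_R=\chi_{\Gamma_R(1)}$.
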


 In order to show this, we construct  $\alpha$-dimensional measures and functions for which $\eqref{frac}$ fails  if $s<s(\alpha,q, n) $. For $\alpha\in (1,n+1]$, we modify the examples in \cite{er3} and  for  $\alpha\in (0,1]$ we use the construction in \cite{HL}.  (See Section \ref{sec:sharpness} for details.) For each $\alpha$  some of the conditions appearing in \eqref{necessary}  are natural in view of dilation
and rescaling structure of the estimate \eqref{frac}.  To be precise,  for the case $n <\alpha \le n+1$ in \eqref{necessary}   the condition $s(\alpha,q,n)= \frac{n+1}4 + \frac{n+1-2\alpha}{2q}$ 
 may be understood as a homogeneity condition and     $s(\alpha,q,n)=\frac n2 - \frac \alpha q$ is related to the Knapp type example. 
 A similar statement also applies to the case $1<\alpha\le n$.

\smallskip

In contrast with the Lebesgue measure,  there is no obvious scaling structure for a general $\alpha$-dimensional
measure.  But,  as is to be seen in what follows, if we assume uniform bound which only depends on $\langle\mu\rangle_\alpha$,  the measure $\mu$ can be handled  as if it is homogeneous of degree $\alpha$ 
with respect to isotropic dilation.  This observation plays an important role throughout the paper.  Similar idea was used in \cite{HL} to obtain 
restriction estimate for the curve with respect to general measures other than the Lebesgue measure.

\smallskip

The estimate \eqref{frac} is  closely related to Fourier restriction problem. We make use of the bilinear approach which was extensively utilized  to tackle the restriction problem. Some aspects of our paper are similar to those of \cite{er3}. However,  unlike \cite{er3} we use the induction on scale argument in more direct way without relying on weighted inequality and this enables us to expose  underlying structure more clearly.  Also our argument can be used to recover the results in \cite{er3}.  It is natural to expect that multilinear restriction estimates \cite{BCT} and its recent development (see \cite{BG, G}) can be used for further improvement of the estimate \eqref{fstrichartz}. However, it does not seem likely  that these estimates would give the sharp estimate such as our result in Theorem \ref{wave}.    

\smallskip

In this paper we will prove \eqref{frac} for  $n\ge3$  (see Theorem \ref{thecone1}) while our results are sharp only for $n=3$.
(The sharp estimates for $n=2$ and $1\le\alpha\le3$ was previously obtained by Erdo\u{g}an \cite{er3}.)
When $n \ge 4$,  the necessary condition  $s \ge s(\alpha,q, n)$  (except the equality case)  is sufficient only for  $n< \alpha \le n+1$ or for large $q$ if $\alpha \le n $.

\subsubsection*{Average decay estimate}
It is well-known that \eqref{frac} implies the associated average decay estimate. 
In fact the decay rate is determined by $s(=s(\alpha,2,n))$ of the estimate \eqref{frac}. 
Let $I_\alpha(\mu)$ be an $\alpha$-dimensional energy of $\mu$ which is given  by
$I_\alpha(\mu)=\iint |x-y|^{-\alpha} d\mu(x)d\mu(y)$.
If  $\mu$ is a positive Borel measure supported in the unit ball and satisfies $I_\alpha(\mu)=1$, there exists a constant $C_\alpha > 0$ such that 
\begin{equation}\label{energy}
\int_{\Gamma_1}
|\widehat \mu(R\xi)|^2 d\sigma(\xi) \le C_{\alpha} 
R^{-n + 2 s(\alpha,2, 3)+\varepsilon} 
\end{equation}
for $R>1$ and any $\varepsilon >0$. 
This can be shown by the argument in \cite{w2} which makes use of Lemma 1.5 and duality.
(See Section 2 in \cite{w2} for details.) 

The  average decay estimate over the sphere has been studied in
connection with Falconer's distance set conjecture. (See \cite{Matt1}, \cite{b1},
\cite{w2}, \cite{er2}, \cite{er1}, \cite{LR} and references therein.)

\subsubsection*{The sphere packing problem}
Let $S(x,r)$ be a sphere in $\mathbb R^{n}$ with center $x$ and radius $r > 0$. We denote the Hausdorff dimension by $dim_{H}$ and the $d$-dimensional Lebesgue measure by $|\cdot|_{d}$. Theorem \ref{thecone} immediately implies the following.  
\begin{corollary}\label{corollary}
Let $E \subset \mathbb R^{n}$ and   $P
$ be a Borel set in $\mathbb R^{n+1}$ with $dim_H(P) > 1$. 
Assume that $|S(x,r) \cap E|_{n-1} > 0$ for any $(x,r) \in P$. 
Then $|E|_n >0$.
\end{corollary}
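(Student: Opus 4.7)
The plan is proof by contradiction. Suppose $|E|_n = 0$. Using countable stability of Hausdorff dimension together with an exhaustion argument, I may replace $P$ by a bounded Borel subset $P' \subseteq P$ on which $|S(x,r)\cap E|_{n-1} \ge c > 0$ uniformly and whose radial coordinate lies in $[\epsilon, K]$ for some $0 < \epsilon < K$, while preserving $\dim_H P' > 1$. Frostman's lemma then supplies $\alpha \in (1, \dim_H P')$ and a nonzero positive Borel measure $\mu$ supported on $P'$ with $\mu(B(y, \rho)) \le \rho^\alpha$ for all $y, \rho$; this $\mu \in \mathfrak M(\mathbb R^{n+1})$ is $\alpha$-dimensional with $\alpha > 1$.

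I then define a finite Borel measure $\nu$ on $\mathbb R^n$ by
\[
\nu(A) := \int_{P'} |S(x,r) \cap A|_{n-1}\, d\mu(x,r),
\]
so that $\nu(E) \ge c\,\mu(P') > 0$. It thus suffices to prove $\nu \ll \mathcal{L}^n$, which would contradict $|E|_n = 0$. My strategy is to deduce this from $\widehat\nu \in L^2(\mathbb R^n)$, since by Plancherel that forces $\nu$ to have an $L^2$ density.

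Let $d\sigma$ denote the surface measure on $S^{n-1}$. From $\widehat{d\sigma_{S(x,r)}}(\xi) = r^{n-1} e^{-2\pi i x\cdot\xi} \widehat{d\sigma}(r\xi)$ and the stationary-phase expansion
\[
\widehat{d\sigma}(\eta) = c_n|\eta|^{-(n-1)/2}\bigl(e^{2\pi i|\eta|} + e^{-2\pi i|\eta|}\bigr) + O\bigl(|\eta|^{-(n+1)/2}\bigr),
\]
one obtains, up to a tail of order $|\xi|^{-(n+1)/2}$ whose $L^2$ contribution is harmless,
\[
|\widehat\nu(\xi)|^2 \lesssim |\xi|^{-(n-1)}\sum_{\pm}\bigl|\widehat{\tilde\mu}(\xi,\pm|\xi|)\bigr|^2,
\]
where $\tilde\mu := r^{(n-1)/2}\, d\mu$ is still $\alpha$-dimensional on $\mathbb R^{n+1}$ up to a bounded factor (since $r \in [\epsilon, K]$ on $\supp\mu$), and $\widehat{\tilde\mu}$ is the $(n+1)$-dimensional Fourier transform. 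Dyadically decomposing in $R = |\xi|$ and applying the average-decay estimate \eqref{energy} to $\tilde\mu$ (after rescaling onto the unit ball) gives
\[
\int_{|\xi|\sim R}|\widehat\nu(\xi)|^2\, d\xi \lesssim R^{-(n-1)} R^{2 s(\alpha,2,n) + \varepsilon}.
\]
For $n = 3$ and $\alpha > 1$, \eqref{necessary} yields $s(\alpha,2,3) = (5-\alpha)/4 < 1 = (n-1)/2$, so the dyadic sum over $R$ converges and $\widehat\nu \in L^2(\mathbb R^n)$.

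The central obstacle is producing the strict inequality $s(\alpha,2,n) < (n-1)/2$ for some $\alpha > 1$, which is exactly what the sharp exponent in Theorem \ref{wave} delivers in dimension three; with only a weaker Oberlin-type estimate, this step would fail for $\alpha$ close to $1$. The remaining ingredients (Frostman's lemma, the stationary-phase asymptotic for $\widehat{d\sigma}$, and Plancherel) are standard, so the corollary is indeed an immediate consequence of Theorem \ref{thecone} once the reduction to $L^2$ of $\widehat\nu$ is in place.
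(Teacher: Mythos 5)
The paper offers no written proof of this corollary; it simply asserts ``Theorem \ref{thecone} immediately implies the following'' and points the reader to Wolff's and Oberlin's papers. So there is no explicit argument in the paper to compare against, but your construction is exactly the intended one: push the Frostman measure through the spherical-averaging operator, bound the Fourier transform of the resulting measure via stationary phase and the average decay estimate \eqref{energy}, and conclude absolute continuity by Plancherel. The chain $\widehat\nu \in L^2 \Rightarrow \nu \ll \mathcal L^n \Rightarrow 0 < \nu(E) \le \|\nu\|_{L^2}|E|_n^{1/2}$ is correct, and the computation $\int_{|\xi|\sim R}|\widehat\nu|^2 \lesssim R^{-(n-1)}\cdot R^{2s(\alpha,2,n)+\varepsilon}$ with the summability condition $s(\alpha,2,n)<(n-1)/2$ is the right thing to extract.

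Two small points deserve care. First, \eqref{energy} is stated under the normalization $I_\alpha(\mu)=1$, not merely under the Frostman bound $\mu(B(y,\rho))\le\rho^\alpha$. An $\alpha$-dimensional Frostman measure generally has $I_\alpha(\mu)=\infty$; one should therefore take a Frostman measure at exponent $\alpha''\in(\alpha,\dim_H P')$ so that $I_\alpha(\mu)<\infty$ for your chosen $\alpha\in(1,\alpha'')$. Since the inequality $s(\alpha,2,n)<(n-1)/2$ is open in $\alpha$, this costs nothing. Second, you carry out the numerics only for $n=3$, whereas the corollary and the estimate \eqref{energy} you cite are not limited to three dimensions; for $n\ge 4$ one should replace $s(\alpha,2,n)$ by the exponent $\widetilde s(\alpha,2,n)$ of Theorem \ref{thecone1}, and one checks easily that $\widetilde s(\alpha,2,n)<(n-1)/2$ for every $n\ge 3$ and $\alpha>1$ since all three quantities in the definition of $\widetilde s$ are strictly decreasing in $\alpha$ and equal $(n-1)/2$ or less at $\alpha=1$. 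For the same reason your final remark about Oberlin is slightly misleading: the non-sharp bound of Theorem \ref{thecone1} already suffices for every $n\ge 3$ and every $\alpha>1$, not just in dimension three; what does \emph{not} suffice is Oberlin's own quoted exponent $s>(n-1)/2$, for which the dyadic sum fails to converge for every $\alpha$, not just $\alpha$ near $1$. Oberlin in fact proved the corollary by a different route (direct estimates on the spherical average), so the failure of the $L^2$ Fourier route with his exponent is not a contradiction.
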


Wolff \cite{w} proved that {Corollary \ref{corollary}} is valid when $n=2$. When $n \ge 3$, Oberlin \cite{ob} showed that the statement holds by obtaining estimate for the spherical average.


\subsubsection*{Organization of the paper}
In Section \ref{sec:bilinear} we prove a bilinear version of \eqref{frac}, 
which is obtained by an adaptation of the induction on scale argument. In Section  \ref{sec:rescaling}
   proofs of Theorem \ref{thecone}  and  Theorem \ref{wave} are given. 
In Section \ref{sec:sharpness}, we discuss the necessary conditions in Proposition \ref{prop:necessary}.

 %

%

\section{Bilinear estimates}\label{sec:bilinear}
In this section we prove a bilinear version of the estimate \eqref{frac}  which is closely related to bilinear restriction estimate for the cone (\cite{tvv, tv1, w3, tao-bilinear, le}). 
Under an additional transversality condition, bilinear estimate gives formally better estimate than linear one by weakening  Kakeya compression phenomena. 


\begin{defn}  For a function $f$ which is supported away from the origin  we define the angular support $\asupp f$  by 
\[ \asupp f=\Big\{  \frac \xi{|\xi|}  :   \xi\in \supp f  \Big \}. \] 
\end{defn}

The following may be regarded as a generalization of bilinear restriction estimate for the cone in \cite{w3}.

\begin{theorem}\label{bifrac}
Let $R\gg 1$ and let $\mu$ be an $\alpha$-dimensional
measure supported in $\overline{B(0,1)}$. Suppose that  $f$ and $g$
are supported  in $\Gamma_R(1)$ and
\begin{equation}\label{angular-separation}
\dist( \asupp f, \asupp g)\ge \frac1{100}. 
\end{equation}
For $2\le
q\le\infty$, there is a constant {$C = C(\beta,n)$} 
such that
\begin{equation}\label{bi}
\left(\int |\widehat f\,\, \widehat g|^{\frac q2} d\mu\right)^\frac2q\le
CR^{2\beta} {\langle\mu\rangle_\alpha^{\frac2q}}\|f\|_2 \|g\|_2
\end{equation}
for any $\beta > \beta(\alpha,q):= \max\{ \frac{n}2-\frac{\alpha} q, \,\frac{3n+1-2\alpha}{8} \} $. 
\end{theorem}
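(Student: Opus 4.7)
The plan is to prove \eqref{bi} by induction on the scale $R$, in the spirit of Wolff's bilinear cone restriction estimate \cite{w3} (cf.\ \cite{tao-bilinear}) adapted to $\alpha$-dimensional measures following Erdo\u{g}an \cite{er3}. The two terms in $\beta(\alpha,q)$ correspond to distinct phenomena: $n/2 - \alpha/q$ is a Knapp-type exponent reflecting possible concentration of $\mu$ on a single plate, while $(3n+1-2\alpha)/8$ is the fractal analogue of the critical bilinear exponent for the cone. They coincide at $q_0 = 8\alpha/(n-1+2\alpha)$. It suffices to establish \eqref{bi} at $q = q_0$: the range $q < q_0$, where the bilinear exponent dominates, follows by H\"older's inequality combined with $\mu(\overline{B(0,1)}) \le \langle\mu\rangle_\alpha$, while the range $q > q_0$, where the Knapp exponent dominates, follows by interpolation with the trivial endpoint $q = \infty$, which is supplied by $\|\widehat f\,\widehat g\|_\infty \le \|f\|_1\|g\|_1 \lesssim R^n \|f\|_2\|g\|_2$ (using Cauchy--Schwarz and $|\Gamma_R(1)| \lesssim R^n$), matching $\beta(\alpha, \infty) = n/2$.

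For the non-trivial estimate at $q = q_0$, I would set up an induction on scale. Let $Q(R)$ denote the best constant in \eqref{bi} at scale $R$ (normalizing $\|f\|_2 = \|g\|_2 = \langle\mu\rangle_\alpha = 1$), and assume inductively that $Q(R') \le C {R'}^{2\beta}$ for all $R' \le R/2$ and all $\alpha$-dimensional measures. Decompose $\Gamma_R(1)$ into Knapp-type plates $\tau$ of dimensions roughly $1 \times R^{1/2} \times \cdots \times R^{1/2} \times R$ along the cone, and correspondingly write $f = \sum_\tau f_\tau$, $g = \sum_\tau g_\tau$. The hypothesis \eqref{angular-separation} restricts the analysis to pairs $(\tau_1, \tau_2)$ whose tangential directions on the cone are uniformly transverse.

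Next, partition physical space into cubes $B$ of side length $R^{1/2}$. On each such cube, an $R^{1/2}$-parabolic rescaling adapted to the cone converts the scale-$R$ problem on $B$ into a scale-$R^{1/2}$ problem for a rescaled measure $\mu_B$ that remains $\alpha$-dimensional with constant controlled by $\langle\mu\rangle_\alpha$. Applying the induction hypothesis to each cube and summing via $\mu(B) \lesssim \langle\mu\rangle_\alpha R^{\alpha/2}$, together with wave packet orthogonality between different pairs of transverse plates, then closes the induction and produces the critical exponent $(3n+1-2\alpha)/8$ from a balance between the number of cubes, the $\alpha$-dimensional measure of each, and the inductive gain.

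The main obstacle will be the careful bookkeeping of the induction: one must control the accumulation of $R^\varepsilon$-losses over $O(\log R)$ iterations, verify that the angular separation \eqref{angular-separation} survives the rescalings at each step, and ensure that the dependence on $\mu$ enters only through $\langle\mu\rangle_\alpha$. The conceptual input that makes this possible is the scale-invariance of the $\alpha$-dimensional condition: as the introduction emphasizes, $\mu$ can be handled as if homogeneous of degree $\alpha$ provided $\langle\mu\rangle_\alpha$ is uniformly bounded. The authors stress that they exploit this more directly than in \cite{er3}, preserving the bilinear structure throughout instead of passing through weighted inequalities, which accounts for the more transparent exposition of the proof.
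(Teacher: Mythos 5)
Your high-level strategy---a Wolff--Tao induction on scales for the cone, a wave packet decomposition, and the observation that the $\alpha$-dimensional condition is scale-invariant so that $\mu$ can be treated as homogeneous---is exactly the paper's framework. But the sketch as written would not close, and it names neither the deep lemma that actually carries the argument nor the correct induction mechanism.

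The concrete gaps are three. \emph{(i)} The paper does not run a one-shot $R\to\sqrt R$ induction on physical cubes of side $R^{1/2}$. After rescaling so that $\mu$ lives in $\overline{B(0,1)}$, it covers $\supp\mu$ by cubes $\mathbf q$ of side $R^{-\delta}$ with $\delta>0$ \emph{small and fixed}, and \emph{bootstraps the exponent}: Lemma~\ref{localization} shows that an estimate with exponent $\beta$ localizes with gain $R^{\delta(n-2\alpha/q-2\beta)}$, and \eqref{imply} then produces a strictly smaller admissible $\beta'$, iterating down to the fixed point $(3n+1-2\alpha)/8$. This structure is forced because the wave-packet cross-term bound below carries a loss $R^{c\delta}$, which is ruinous at $\delta=1/2$; one must take $\delta$ tiny and iterate $O(1/\delta)$ times. \emph{(ii)} What you compress into ``wave packet orthogonality between different pairs of transverse plates'' is Wolff's local/non-local decomposition (Lemma~\ref{wavepacket}, after \cite{w3,tao-bilinear,le}): for each $\mathbf q$ one writes $f=f_{\mathbf q}+f_{\mathbf q}'$, $g=g_{\mathbf q}+g_{\mathbf q}'$ so that $\sum_{\mathbf q}\|f_{\mathbf q}\|_2^2\lesssim R^\varepsilon\|f\|_2^2$ and, crucially, $\|\widehat{f_{\mathbf q}}\,\widehat{g_{\mathbf q}'}\|_{L^2(\mathbf q)}\lesssim R^{(n-1)/4+c\delta+\varepsilon}\|f\|_2\|g\|_2$. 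The latter is a genuine Kakeya-type incidence estimate, not an orthogonality statement, and it is where $(n-1)/4$ is earned. Moreover, passing from this Lebesgue $L^2(\mathbf q)$ bound to an $L^{q/2}(d\mu)$ bound requires Lemma~\ref{fractal}, $\|\phi_R*d\mu\|_r\lesssim\langle\mu\rangle_\alpha R^{(n+1-\alpha)(1-1/r)}$, which your sketch omits; combining the two at $q=4$ (or $q=2$) gives $(n-1)/4+(n+1-\alpha)/2=(3n+1-2\alpha)/4=2\beta$. Thus the critical exponent comes directly out of the non-local terms, not from ``a balance between the number of cubes, the $\alpha$-dimensional measure of each, and the inductive gain''; the bootstrap is needed only so the local term $\widehat{f_{\mathbf q}}\widehat{g_{\mathbf q}}$ catches up. \emph{(iii)} Your reduction to the single exponent $q_0=8\alpha/(n-1+2\alpha)$ is valid only when $q_0\ge2$, i.e.\ $\alpha\ge(n-1)/2$; for smaller $\alpha$ the critical point falls below $q=2$, the Knapp term $n/2-\alpha/q$ dominates throughout $[2,\infty]$, and you would still have to prove the $q=2$ endpoint directly---Plancherel together with Lemma~\ref{fractal} alone gives only $(n+1-\alpha)/2$, which overshoots by $1/2$. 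The paper instead proves \eqref{bi} for all $q\in[2,4]$ (the bootstrap naturally halts at $\max\{n/2-\alpha/q,\ (3n+1-2\alpha)/8\}$) and interpolates with $q=\infty$ only to reach $q>4$.
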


Since $|\widehat f|\lesssim  R^{\frac n2} \|f\|_2$ and $|\widehat g|\lesssim R^{\frac n2} \|g\|_2$,   \eqref{bi} trivially holds with  $2\beta\ge n$. 
It is easy to verify that the condition $\beta \ge \frac{n}2-\frac{\alpha} q$ is necessary by adopting the examples which are used to show 
the necessity of the condition \eqref{scaling}. 
Another necessary condition for \eqref{bi} with $\alpha >2$ is 
\begin{equation}\label{bilinear-condition}
\beta\ge \frac{n-1}4-\frac{\alpha-2}{2q}. 
\end{equation}
To see this, we consider the squashed
cap function (see \cite{tvv}) and a suitable measure. Precisely, 
let us set $d\mu=\psi(x)|x''|^{-n-1+\alpha}
dx_1dx_2dx''$, $x=(x_1, x_2, x'')$  for a smooth cutoff function $\psi$. 
Then $\mu$ is an $\alpha$-dimensional measure if $n+1\ge
\alpha>2$. 
Considering a pair of characteristic functions $f, g$ supported in $\Gamma_R(1)$ with large
angular separation and dimensions  $1\times 1\times
\overset{\,\,\,\,\,(n-1)\text{ times}}{\sqrt R \times \cdots \times \sqrt R}$ it is easy to show \eqref{bilinear-condition}.

\subsubsection*{Localization  to a smaller cube}  Improvement due to localization is important for the induction on scale argument. 
In the following lemma  we make it precise how localization to smaller cubes affects  the estimate \eqref{bi}.

\begin{lem}\label{localization}  Let  $\beta\ge 0$, $q\ge2$. 
Suppose that \eqref{bi} is valid for any $\alpha$-dimensional measure $\mu$ supported in $\overline{B(0,1)}$, and for  any  $f$,
$g$ supported in $\Gamma_R(A)$ for some $A \sim 1$ satisfying the condition \eqref{angular-separation}. Then, for $x_0 \in \mathbb R^{n+1}$ and $\rho\le (10A)^{-1}$,
\begin{align}\label{local1}
\left(\int_{B(x_0,\rho)} |\widehat f\,\, \widehat g|^{\frac q2} d\mu\right)^\frac2q\le
 C {\langle\mu\rangle_\alpha}
 ^{\frac2q}R^{2\beta}\rho^{\frac{2\alpha} q+2\beta-n} \|f\|_2 \|g\|_2.
 \end{align}
\end{lem}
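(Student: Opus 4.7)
The plan is to exploit an affine rescaling that reduces the localized estimate over $B(x_0,\rho)$ to the global bilinear hypothesis \eqref{bi} applied to rescaled data at the smaller scale $\rho R$. This is the standard device for converting a local improvement into a scaling relation.

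Set $T(y)=x_0+\rho y$, which maps $\overline{B(0,1)}$ onto $\overline{B(x_0,\rho)}$, and define the pushforward measure $\tilde\mu$ on $\overline{B(0,1)}$ by $\tilde\mu(E)=\mu\bigl(T(E)\cap B(x_0,\rho)\bigr)$. Using $T(B(y,r))=B(T(y),\rho r)$, one checks $\tilde\mu(B(y,r))\le\langle\mu\rangle_\alpha(\rho r)^\alpha$, so $\tilde\mu$ is $\alpha$-dimensional with $\langle\tilde\mu\rangle_\alpha\le\rho^\alpha\langle\mu\rangle_\alpha$. Next, define $\tilde f(\eta)=\rho^{-(n+1)}e^{-2\pi i(\eta/\rho)\cdot x_0}f(\eta/\rho)$, and $\tilde g$ analogously, normalized so that $\widehat{\tilde f}(y)=\widehat f(x_0+\rho y)$. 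Then $\supp\tilde f=\rho\,\Gamma_R(A)=\Gamma_{\rho R}(\rho A)\subset\Gamma_{\rho R}(1)$ since $\rho A\le\tfrac{1}{10}$; the angular support $\asupp\tilde f=\asupp f$ is preserved under the positive dilation $\xi\mapsto\rho\xi$, so the separation condition \eqref{angular-separation} is inherited by $(\tilde f,\tilde g)$; and a direct change of variables gives $\|\tilde f\|_2=\rho^{-(n+1)/2}\|f\|_2$ (and similarly for $\tilde g$).

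Now apply the bilinear hypothesis \eqref{bi} at the effective scale $\rho R$ to $(\tilde f,\tilde g,\tilde\mu)$:
\[
\Bigl(\int|\widehat{\tilde f}\,\widehat{\tilde g}|^{q/2}\,d\tilde\mu\Bigr)^{2/q}\le C(\rho R)^{2\beta}\,\langle\tilde\mu\rangle_\alpha^{2/q}\,\|\tilde f\|_2\|\tilde g\|_2.
\]
The identity $\widehat{\tilde f}(y)=\widehat f(T(y))$ together with the construction of $\tilde\mu$ identifies the left-hand side with $\bigl(\int_{B(x_0,\rho)}|\widehat f\,\widehat g|^{q/2}\,d\mu\bigr)^{2/q}$. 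Substituting $\langle\tilde\mu\rangle_\alpha^{2/q}\le\rho^{2\alpha/q}\langle\mu\rangle_\alpha^{2/q}$, the $L^2$-relation $\|\tilde f\|_2\|\tilde g\|_2=\rho^{-(n+1)}\|f\|_2\|g\|_2$, and $(\rho R)^{2\beta}=\rho^{2\beta}R^{2\beta}$, and collecting all $\rho$-powers produces the factor appearing in \eqref{local1}.

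The only real obstacle is the careful bookkeeping of these $\rho$-powers, which originate from four sources: the rescaled measure's $\alpha$-dimensional constant, the $L^2$-normalization of $\tilde f$ and $\tilde g$, the factor $(\rho R)^{2\beta}$ in the hypothesis, and the Jacobian-like absorption that identifies the two sides after the change of variables. One must also verify that \eqref{bi} is available at the reduced scale $\rho R$; this is automatic if \eqref{bi} is read uniformly for all $R\gg 1$, and when $\rho R\lesssim 1$ one can fall back on the trivial pointwise bound $|\widehat f|\le\|f\|_1\lesssim R^{n/2}\|f\|_2$, which is easily seen to imply \eqref{local1} in that range directly.
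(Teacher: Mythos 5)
Your overall rescaling strategy is the right one and closely mirrors the paper's: translate $x_0$ to the origin, dilate by $\rho$, push $\mu$ forward and verify the $\alpha$-dimensional constant, and apply the hypothesis \eqref{bi} at the reduced scale $\rho R$. However, the final bookkeeping is off by a factor of $\rho^{-1}$, and this is not a typo but a genuine missing idea.

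Concretely, collecting your stated powers gives
\[
(\rho R)^{2\beta}\cdot \rho^{2\alpha/q}\langle\mu\rangle_\alpha^{2/q}\cdot \rho^{-(n+1)}\|f\|_2\|g\|_2
= \langle\mu\rangle_\alpha^{2/q}\,R^{2\beta}\,\rho^{\frac{2\alpha}{q}+2\beta-n-1}\,\|f\|_2\|g\|_2,
\]
whereas \eqref{local1} demands the exponent $\frac{2\alpha}{q}+2\beta-n$. Since $\rho<1$, your bound is strictly worse. What your argument overlooks is that the rescaled functions $\tilde f,\tilde g$ are supported in the \emph{thin} slab $\Gamma_{\rho R}(\rho A)$, much thinner than the $\Gamma_{\rho R}(1)$ to which \eqref{bi} is calibrated; applying \eqref{bi} blindly to such thin data wastes information. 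The paper recovers the missing $\rho$ by a mollification trick: since the rescaled measure lives in $\overline{B(0,1)}$, one may pick $\eta$ with $\widehat\eta\sim 1$ on $\overline{B(0,1)}$ and $\supp\eta\subset B(0,1/2)$ and replace $\widehat{\tilde f}\,\widehat{\tilde g}$ by $\widehat{\eta\ast\tilde f}\,\widehat{\eta\ast\tilde g}$ on the support of the measure, at the cost only of a constant. Now $\eta\ast\tilde f$ still lies in $\Gamma_{\rho R}(1)$, but crucially one has $\|\eta\ast\tilde f\|_2\lesssim\rho^{-n/2}\|f\|_2$ rather than $\rho^{-(n+1)/2}\|f\|_2$: the convolution averages over a $\rho$-thin slab, gaining a factor $\rho^{1/2}$ per function (e.g., via Cauchy--Schwarz on the essentially unit-ball intersected with the $\rho$-thin support of $\tilde f$). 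This gain of $\rho^{1/2}\cdot\rho^{1/2}=\rho$ is exactly what closes the gap and produces the exponent in \eqref{local1}. Without it, your proof yields only the weaker localization $\rho^{\frac{2\alpha}{q}+2\beta-n-1}$, which is insufficient for the induction-on-scales step in the proof of Theorem \ref{bifrac}.
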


\begin{proof}  
By translation we may
assume $x_0=0$ without loss of generality. 

Since the map $\phi \mapsto \rho^{-\alpha} \int \phi(\rho^{-1} x) d\mu(x)$   is a positive linear functional on $C_c(\mathbb R^{n+1})$, by the Riesz representation theorem there exists a Radon measure $\mu_\rho$ such that
\[\int \phi(y) d\mu_\rho(y)= \rho^{-\alpha}\int \phi(\rho^{-1} x) \chi_{B(0,\rho)} (x)d\mu(x)
\] 
for any continuous function $\phi$. 
Then it is obvious that
$\mu_\rho$ is supported in $ \overline{B(0,1)}$ and $\langle\mu_\rho\rangle_\alpha \le
\langle\mu\rangle_\alpha$.
Let us set  $f_\rho=\rho^{-n-1} f(\cdot/\rho)$, $g_\rho=\rho^{-n-1} g(\cdot/\rho)$. Then,  
it follows  that
\begin{align}\label{local2}
\int_{B(0,\rho)} |\widehat f\, \widehat g |^{\frac q2} d
\mu 
=\rho^\alpha \int_{{B(0,1)}} |\widehat f_\rho\,\widehat g_\rho|^{\frac q2}
d \mu_\rho .
\end{align}
We note that $f_\rho$ and $ g_\rho$ are supported in the set $\Gamma_{R\rho}(A \rho).$ Since the measure $ \mu_\rho$ is supported in $\overline{B(0,1)}$,
we may put a harmless smooth function $\eta$ in the integral which
satisfies $\widehat \eta\sim 1$ on $\overline{B(0,1)}$ and $\supp \eta \subset
B(0,\frac12)$. 
Then we see
\begin{align}  \label{local3}
\begin{aligned}
\int |\widehat f_\rho(x)\widehat g_\rho(x)|^{\frac q2} d \mu_\rho
(x)&\sim \int |\widehat\eta ^2 \widehat f_\rho(x)\widehat
g_\rho(x)|^{\frac q2} d \mu_\rho (x)
\\&=\int | \widehat {\eta\ast
f_\rho}(x)\widehat{ \eta\ast g_\rho}(x)|^{\frac q2} d \mu_\rho (x).
\end{aligned} \end{align}
Note 
that $\eta\ast f_\rho$ and $\eta\ast g_\rho$ are 
supported in $\Gamma_{R\rho}(1)$. 
Hence, from \eqref{bi} we have 
\[
\Big(\int | \widehat {\eta\ast
f_\rho}(x)\widehat{ \eta\ast g_\rho}(x)|^{\frac q2} d \mu_\rho
(x)\Big)^\frac2q \le C \langle \mu\rangle_{\alpha}^{\frac2q}(R\rho)^{2\beta} \|\eta\ast f_\rho\|_2\|\eta\ast
g_\rho\|_2.
\]
%
Since $f_\rho$ and $g_\rho$ are supported in
$\Gamma_{R\rho }( A\rho)$, 
 $\|\eta\ast f_\rho\|_2\le
C\rho^{-n/2}\|f\|_2$\footnote{This follows from H\"older inequality and easy estimates $ \|\eta\ast f_\rho\|_1\lesssim
\|f\|_1$,  $\|\eta\ast f_\rho\|_\infty \lesssim \rho^{-n}
\|f\|_\infty$.  }   and $\|\eta\ast g_\rho\|_2\le C\rho^{-n/2}
\|g\|_2$. 
Therefore \eqref{local1} follows by \eqref{local2} and \eqref{local3}.
\end{proof}

\subsubsection*{Decomposition of $f$ and $g$}  For the  proof of Theorem \ref{bifrac}  we adapt the induction on scale argument which has been used to prove sharp bilinear restriction estimate \cite{w3, tao-bilinear, le}.  The following is actually a rescaled version of  \cite[Lemma 3.5] {w3}  (also see  \cite{ tao-bilinear, le} for a simpler proof based on wave packet decomposition, especially the proof of Theorem 1.3 in \cite{le}). 

\begin{lemma}\label{wavepacket}
Let $R \gg 1$ and $0<\delta\le 1$. Let $\{\mathbf
q\} $ be a collection of cubes of sidelength $\sim R^{-\delta}$ which partition the cube
$[-1,1]^{n+1}$.  Suppose $f$ and $g$ are supported in $\Gamma_R(1)$ and satisfy \eqref{angular-separation}. Then for each $\mathbf
q$, $f$ and $g$ can be decomposed such that
\[ f=f_{\mathbf q}+ f_{\overset{\not\sim}{\mathbf q}} \quad \text{and} \quad  g=g_{\mathbf q}+ g_{\overset{\not\sim}{\mathbf q}} \]
with $f_{\mathbf q}$,
$f_{\overset{\not\sim}{\mathbf q}}$, $g_{\mathbf q}$,
$g_{\overset{\not\sim}{\mathbf q}} $ supported in $\Gamma_R(C_d)$ for some $C_d\sim 1$
and,  for $0<\epsilon\ll \delta$, 
\begin{align}
\label{l2sumq}
 &\sum_{\mathbf q} \|f_{\mathbf q}\|_2^2\le CR^\epsilon
\|f\|_2^2,\qquad \sum_{\mathbf q} \|g_{\mathbf q}\|_2^2\le
CR^\epsilon \|g\|_2^2,
\\
\label{bil2} \|\widehat {f_{\mathbf
q}}\widehat{g_{\overset{\not\sim}{\mathbf q}}}\|&_{L^2(\mathbf q)},
\quad \|\widehat{f_{\overset{\not\sim}{\mathbf q}}}\widehat
{g_{\mathbf q}}\|_{L^2(\mathbf q)}, \quad
\|\widehat{f_{\overset{\not\sim}{\mathbf
q}}}\widehat{g_{\overset{\not\sim}{\mathbf q}}}\|_{L^2(\mathbf q)}
\le C R^{\frac{n-1}4+c\delta+\epsilon} \|f\|_2\|g\|_2
\end{align}
with $c$ independent of $\delta$ and $\epsilon$.
\end{lemma}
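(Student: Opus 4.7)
The plan is to build $f_{\mathbf q}, f_{\overset{\not\sim}{\mathbf q}}$ (and analogously for $g$) by a wave packet decomposition of $f$ adapted to the cone $\Gamma_R(1)$, with each wave packet assigned to a single cube of the partition. This is a rescaled version of \cite[Lemma~3.5]{w3}, in the simplified form of \cite[proof of Theorem~1.3]{le}. First, cover $\Gamma_R(1)$ by finitely overlapping plates $\tau$ of dimensions $1\times R^{1/2}\times\cdots\times R^{1/2}\times R$ --- one direction normal to the cone, $n-1$ tangential angular, and one radial --- corresponding to the standard $R^{-1/2}$-angular cap cover at scale $R$. Using a smooth frequency partition of unity subordinate to $\{\tau\}$, write $f=\sum_\tau f_\tau$ and decompose each $f_\tau$ into $L^2$-normalized wave packets $\{\phi_T\}$ indexed by tubes $T$ of the dual dimensions $1\times R^{-1/2}\times\cdots\times R^{-1/2}\times R^{-1}$, centered on a dual lattice in physical space. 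Standard wave packet theory then gives almost-orthogonality, $\sum_T\|\phi_T\|_2^2\sim\|f\|_2^2$, and each $\phi_T$ has frequency support in a slight enlargement of its plate.

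For each cube $\mathbf q$, let $\mathcal{T}_{\mathbf q}$ be the collection of tube indices whose center lies in $\mathbf q$, and set $f_{\mathbf q}=\sum_{T\in\mathcal{T}_{\mathbf q}}\phi_T$ and $f_{\overset{\not\sim}{\mathbf q}}=f-f_{\mathbf q}$; analogously for $g$. Since each tube is assigned to exactly one cube and each $\phi_T$ carries only a slightly enlarged frequency cutoff, both $f_{\mathbf q}$ and $f_{\overset{\not\sim}{\mathbf q}}$ are supported in $\Gamma_R(C_d)$ for a universal $C_d\sim 1$. Almost-orthogonality yields \eqref{l2sumq} directly: $\sum_{\mathbf q}\|f_{\mathbf q}\|_2^2=\sum_T\|\phi_T\|_2^2\lesssim R^\epsilon\|f\|_2^2$, where the $R^\epsilon$ absorbs the finite overlap of the smooth partition.

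For the bilinear estimate \eqref{bil2}, the input is the Tao--Vargas--Vega bilinear $L^2$-cone restriction estimate \cite{tvv},
\[
\|\widehat{h}\,\widehat{k}\|_{L^2(\mathbb R^{n+1})}\lesssim R^{(n-1)/4}\|h\|_2\|k\|_2,
\]
valid for $h,k$ supported in $\Gamma_R(1)$ with angular separation bounded below. The separation \eqref{angular-separation} of $f$ and $g$ passes to any mixed pair drawn from $\{f_{\mathbf q},f_{\overset{\not\sim}{\mathbf q}}\}$ and $\{g_{\mathbf q},g_{\overset{\not\sim}{\mathbf q}}\}$, so applying this on $\mathbb R^{n+1}$ and restricting to $\mathbf q$ produces each of the three bounds in \eqref{bil2} with constant $R^{(n-1)/4}\|f\|_2\|g\|_2$; the extra slack $R^{c\delta+\epsilon}$ is comfortably provided and is kept only because it accommodates the downstream induction on scales.

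The main obstacle is the careful bookkeeping of Schwartz tails --- both of each wave packet outside its nominal tube and of the frequency cutoffs outside their plates --- to verify that the ``hard'' cube assignment loses only the claimed $R^\epsilon$ while preserving the compact frequency support in $\Gamma_R(C_d)$. This technical but standard step is precisely what is worked out in \cite{w3,le}, and for our setting requires only an appropriate rescaling from unit scale to scale $R$.
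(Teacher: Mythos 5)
The proposal breaks down at \eqref{bil2}, and the error is not cosmetic. You invoke a ``Tao--Vargas--Vega bilinear $L^2$ cone estimate''
\[
\|\widehat h\,\widehat k\|_{L^2(\mathbb R^{n+1})}\lesssim R^{(n-1)/4}\|h\|_2\|k\|_2
\]
for transversal $h,k$ supported in $\Gamma_R(1)$, and claim \eqref{bil2} follows simply by restricting this to $\mathbf q$. But the correct global bilinear $L^2$ bound with transversality is $R^{(n-1)/2}$, not $R^{(n-1)/4}$. Indeed, writing $\|\widehat h\,\widehat k\|_2 = c\|h\ast k\|_2$ and applying Cauchy--Schwarz in the convolution variable, the bound is governed by $\sup_\xi |\{\eta\in\supp h: \xi-\eta\in\supp k\}|$; two transversal unit-thick slabs of the cone at frequency scale $R$ intersect in a set of measure $\sim R^{n-1}$, giving $\|\widehat h\,\widehat k\|_2\lesssim R^{(n-1)/2}\|h\|_2\|k\|_2$. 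Equivalently, at the unit scale the paper notes that the off-diagonal bound $R^{-(n+3)/4+c\delta+\epsilon}$ improves on the trivial $R^{-1}$ for $n\ge 2$; after the rescaling $F=R^{(n+1)/2}f(R\cdot)$ this becomes the improvement from $R^{(n-1)/2}$ to $R^{(n-1)/4+c\delta+\epsilon}$ in \eqref{bil2}. So you are invoking, as a trivial input, a bound that is stronger by a full factor $R^{(n-1)/4}$ than what Cauchy--Schwarz alone gives.

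That extra factor is precisely what the $\sim$ relation of Tao (and its wave-packet reformulation in Lee) is designed to produce. The relation between a wave packet $w$ and a cube $R\mathbf q$ is not ``the tube's center lies in the cube''; it encodes a bush/brush-type geometric condition chosen so that the packets not related to $\mathbf q$ contribute, on $\mathbf q$, an $L^2$ bound smaller than the trivial one by about $R^{-(n-1)/4}$. Your assignment by tube center does yield \eqref{l2sumq} and the frequency support claim, but it gives no leverage for \eqref{bil2}: one simply gets the trivial $R^{(n-1)/2}$ again. The paper's proof sets $F=R^{(n+1)/2}f(R\cdot)$, $G=R^{(n+1)/2}g(R\cdot)$, performs the wave-packet decomposition at unit scale, defines $F_{\mathbf q}$ by summing $c_{w}p_{w}$ over $w\sim\mathbf q$ in Tao's sense, and quotes the off-diagonal $L^2(R\mathbf q)$ estimate from \cite{tao-bilinear,le} (with the extra $R^{-1}$ because $F,G$ live on $\Gamma_1(R^{-1})$ rather than on the surface), then rescales back. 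The ``technical but standard step'' you defer at the end is in fact the entire content of the lemma; without the $\sim$ relation and the associated geometric argument, \eqref{bil2} does not follow.
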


\begin{proof}   Let $F$ and $G$ be functions supported in $\Gamma_1(R^{-1})$. 
Then, by using  wave packet decomposition\footnote{It should be noted that we are working with functions instead of extension  operators.  Nevertheless, the argument in \cite{le} works without extra difficulty.} (\cite{tao-bilinear, le}), on $B(0, R)$   we  may write \[\widehat F = \sum_{w_1}c_{w_1}p_{w_1} \quad \text{and} \quad \widehat G = \sum_{w_2}c_{w_2}p_{w_2},\]
where $c_{w_i}$ and $p_{w_i}$ satisfy the following:  
\begin{align*}
&(1)\,\, \big(\sum_{w_1 \in W_1}|c_{w_1}|^2\big)^{1/2} \le C\|F\|_2,\,\,\big(\sum_{w_2 \in W_2}|c_{w_2}|^2\big)^{1/2} \le C \|G\|_2; \\ 
&(2)\,\,\, \text{If}\,\,  w_i\in W_i, \, \supp \widehat {p_{w_i}} \subset \{\xi : \xi =  v_i + O(R^{-1/2})\};\\ 
&(3)\,\,\, \text{If} \,\,  \dist (x,T_{w_i}) \ge R^{1/2 + \delta}, \,\, |p_{w_i}(x)|\le CR^{-100n}; \\ &(4) \,\,\,
\text{For any subset}\,\, S \subset W_{i},\, \|\sum_{w_i \in S}p_{w_i}\|_2^2 \le C \# S.
\end{align*}
Here $W_i$ is the set of all pairs $w_i = (y_i,v_i)$ with $y_i \in R^{1/2}\mathbb Z^n$ and $v_i \in R^{-1/2}\mathbb Z^n \cap \{  v \in \mathbb R^n: 2^{-1} \le  |v| \le 2^2\}  )$ and  $T_{w_i}$ is defined by 
\[T_{w_i} =\big \{(x',x_{n+1}) \in \mathbb R^n \times \mathbb R : |x_{n+1}| \le R, \big|x'-\big(y_i+x_{n+1}\frac{v_i}{|v_i|}\big)\big| \le R^{1/2}\big\}\] for each $w_i \in W_i$, $i=1,2$. 

Partition $[-R,R]^{n+1}$ into  $R^{1-\delta}$-cubes $\{R\mathbf q\}$ which are essentially disjoint. Then,  \[\|\widehat F\,\widehat{G}\|_{L^q(B(0,R))} \le \sum_{\mathbf q}\|\widehat F\,\widehat{G}\|_{L^q( R\mathbf q)}.\]
Now we use the relation $\sim$ between $w_i$ and  $R^{1-\delta}$ cube $R\mathbf q$ which was introduced in \cite{tao-bilinear} (also see \cite{le}).  For $\mathbf q$ fixed,  we say  $w_i\sim \mathbf q$ if $w_i\sim R\mathbf q$ and, otherwise, we say $w_i \not\sim \mathbf q$. 
We keep the same notation $\sim$ since it does not cause any ambiguity.
We now set \[\widehat{F_{\mathbf q}} = \sum_{w_1 \sim \mathbf q} c_{w_1} p_{w_1},\quad \widehat{F_{\overset{\not\sim}{\mathbf q}}} = \sum_{w_1\not\sim \mathbf q} c_{w_1} p_{w_1}, \quad
\widehat{G_{\mathbf q}} = \sum_{w_2 \sim \mathbf q} c_{w_2} p_{w_2}\quad \widehat{G_{\overset{\not\sim}{\mathbf q}}} 
= \sum_{w \not\sim \mathbf q} c_{w_2} p_{w_2}.\]	
Then, by repeating the argument in \cite{tao-bilinear, le} it is not difficult to see that
\begin{align*}
&\sum_{\mathbf q} \|\widehat {F_{\mathbf q}}\|_2^2\lesssim R^\epsilon
\|F\|_2^2, \quad \sum_{\mathbf q} \|\widehat {G_{\mathbf q}}\|_2^2\lesssim
R^\epsilon \|G\|_2^2;
\\
\|\widehat {F_{\mathbf
		q}}\widehat{G_{\overset{\not\sim}{\mathbf q}}}\|_{L^2(R\mathbf q)},
		&
\quad \|\widehat{F_{\overset{\not\sim}{\mathbf q}}}\widehat
{G_{\mathbf q}}\|_{L^2( R\mathbf q)}, \quad
\|\widehat{F_{\overset{\not\sim}{\mathbf
			q}}}\widehat{G_{\overset{\not\sim}{\mathbf q}}}\|_{L^2( R\mathbf q)}
\lesssim R^{-\frac{n+3}4+c\delta+\epsilon} \|F\|_2\|G\|_2.
\end{align*} 
Since we are dealing with functions $F$ and $G$ which are supported in $\Gamma_1(R^{-1})$ instead of being supported on the surfaces,
we obtain an extra power $R^{-1}$ compared to the extension operator (\textit{cf.} \cite{tao-bilinear, le}).

Since $f, g$ are supported in $\Gamma_R(1)$ and satisfy \eqref{angular-separation},  we apply the above to   \[F = R^{\frac{n+1}{2}}f(R\cdot) \quad \text{and}\quad  G = R^{ \frac{n+1}{2}}g(R\cdot). \]
Then,  the change of variable $x\to   x/R $  gives the desired estimates \eqref{l2sumq} and \eqref{bil2}.	
\end{proof}

To prove Theorem \ref{bifrac} we make use of the following simple lemma.
\begin{lem}\label{fractal} Let $R\gg 1$ and $\mu$ be an  $\alpha$-dimensional
measure supported in $\overline{B(0,1)}$.
Set $\phi_R=R^{n+1}\phi(R\cdot)$ for a Schwartz function $\phi$. Then, for any $r \ge 1$ and any Schwartz function $\phi$,
 there is a constant $C$, independent of $\mu$, such that
\begin{equation}\label{convolution}
\|\phi_R\ast d\mu\|_{r}\le C\langle\mu\rangle_\alpha R^{(n+1-\alpha)(1-\frac1r)}.
\end{equation}
\end{lem}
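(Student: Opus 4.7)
The plan is to prove the bound at the endpoints $r=1$ and $r=\infty$ and then obtain the general case by log-convexity of $L^r$ norms. Both endpoint estimates exploit only the growth hypothesis \eqref{def-measure} and the Schwartz decay of $\phi$.

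For the $L^1$ endpoint, I would note that $\|\phi_R\|_1 = \|\phi\|_1$ is independent of $R$, so Fubini (or Young's inequality for measures) gives
\begin{equation*}
\|\phi_R \ast d\mu\|_1 \le \|\phi\|_1 \cdot \mu(\mathbb R^{n+1}).
\end{equation*}
Because $\mu$ is supported in $\overline{B(0,1)}$, this ball can be covered by $O(1)$ unit balls, so applying \eqref{def-measure} with $\rho=1$ yields $\mu(\mathbb R^{n+1}) \le C \langle\mu\rangle_\alpha$, hence $\|\phi_R \ast d\mu\|_1 \le C \langle\mu\rangle_\alpha$.

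For the $L^\infty$ endpoint I would use that, for any $N > \alpha$, $|\phi_R(x)| \le C_N R^{n+1}(1+R|x|)^{-N}$. Therefore, pointwise,
\begin{equation*}
|(\phi_R \ast d\mu)(x)| \le C_N R^{n+1} \int (1+R|x-y|)^{-N}\, d\mu(y).
\end{equation*}
I would then dyadically decompose the range of $|x-y|$ into $B(x,R^{-1})$ and the annuli $\{2^k R^{-1} \le |x-y| < 2^{k+1} R^{-1}\}$ for $k \ge 0$, and apply \eqref{def-measure} on each piece, bounding the integral by
\begin{equation*}
\langle\mu\rangle_\alpha R^{-\alpha} + \sum_{k \ge 0} 2^{-kN} \langle\mu\rangle_\alpha (2^{k+1} R^{-1})^\alpha \le C \langle\mu\rangle_\alpha R^{-\alpha},
\end{equation*}
the geometric sum converging once $N > \alpha$. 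Multiplying by $R^{n+1}$ gives $\|\phi_R \ast d\mu\|_\infty \le C \langle\mu\rangle_\alpha R^{n+1-\alpha}$.

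Finally, by log-convexity of $L^r$ norms,
\begin{equation*}
\|\phi_R \ast d\mu\|_r \le \|\phi_R \ast d\mu\|_1^{1/r} \|\phi_R \ast d\mu\|_\infty^{1-1/r} \le C \langle\mu\rangle_\alpha R^{(n+1-\alpha)(1-1/r)},
\end{equation*}
which is \eqref{convolution}. There is no real obstacle; the only subtlety is the use of the compact support of $\mu$ at the $L^1$ endpoint to control the total mass by $\langle\mu\rangle_\alpha$ (this step is essential because the $L^1$ bound is otherwise insensitive to $R$ and would force the dependence on $\mu$ in a form other than through $\langle\mu\rangle_\alpha$).
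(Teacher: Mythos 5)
Your argument is correct and matches the paper's proof essentially step for step: the $L^\infty$ bound via dyadic decomposition and the growth condition, the $L^1$ bound via Young's inequality and the compact support of $\mu$, and interpolation by log-convexity. The paper packages the dyadic decomposition as $\phi_R \le C_N R^{n+1}\sum_{j\ge 0} 2^{-Nj}\chi_{B(0,2^jR^{-1})}$, but this is the same computation you carry out.
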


\begin{proof} By rapid decay of $\phi$ it follows that 
$\phi_{R}\le C_N R^{n+1}\sum_{j\ge 0} 2^{-Nj}\chi_{B(0,2^{j}R^{-1})}$ for all $N >0$.
{Hence, recalling the definition of $\langle\mu\rangle_\alpha$ in Definition 1.1} and choosing a sufficiently large $N$,  we have
\begin{align*}
\phi_R\ast d\mu(x)&\le C_N R^{n+1}\sum_{j\ge 0}
2^{-Nj}\mu(B(x,2^{j}R^{-1}))\\
&\le C_N \langle\mu\rangle_\alpha R^{n+1-\alpha}\sum_{j\ge 0} 2^{(\alpha-N)j}
\le C_N \langle\mu\rangle_\alpha R^{n+1-\alpha}.
\end{align*}
This yields \eqref{convolution} with $r=\infty$. 
Since $\mu$ is supported in $\overline{B(0,1)}$, we also have $\|\mu\|\le 2^\alpha\langle\mu\rangle_\alpha$  where $ \|\mu\| = \sup \{ |\int f(x) d\mu(x) | : |f|\le 1,\,  f \in C(\mathbb R^{n+1}) \}$.  
So, from Young inequality  $\|\phi_R\ast d\mu\|_1\le \|\phi_R\|_1 \|\mu\|\lesssim \langle\mu\rangle_\alpha$.
Hence, \eqref{convolution} follows from $\| \phi_R \ast d\mu\|_r^r \le \|\phi_R \ast d\mu \|_\infty^{r-1} \|\phi_R \ast d\mu\|_1$ for any $r\ge 1$.
\end{proof}

\begin{proof}[Proof of Theorem \ref{bifrac}]

It is sufficient to show the case $2 \le q \le 4$. 
Extension to $q\ge 4$ can be obtained by interpolation with
the {trivial estimate}
\[
\| \widehat f\,\,\widehat g\|_{L^\infty {(d\mu)}} \lesssim R^{n} \| f \|_{L^2} \| g \|_{L^2},
\]
which is valid since $\widehat f$ and $\widehat g$ are continuous.

In what follows we prove the implication from \eqref{bi}  to  \eqref{imply}. 
We assume that \eqref{bi} holds for some $\beta>0$ and $f,g$ supported in $\Gamma_R(1)$. As is mentioned before, this is true with a large $\beta>0$. Since $\mu$ is supported in $\overline{B(0,1)}$,  we have 
\[\Big(\int |\widehat f\,\,\widehat g|^{q/2}
d\mu  \Big)^\frac2q
\le \sum_{\mathbf q} \Big( \int_{\mathbf q} | \widehat
f\,\,\widehat g|^{q/2} d\mu \Big)^\frac2q.\] 
 Using  the decomposition in Lemma \ref{wavepacket}, we have
\begin{equation} \label{qube}
\begin{aligned}
\Big(\int |\widehat f\,\,\widehat g|^{q/2}
d\mu  \Big)^\frac2q
&
\le \sum_{\mathbf q} \Bigg[\Big( \int_{\mathbf q} | \widehat
f_{\mathbf q}\widehat g_{\mathbf q}|^{q/2} d\mu \Big)^\frac2q +
\Big( \int_{\mathbf q} | \widehat {f_{\mathbf
q}}\widehat{g_{\overset{\not\sim}{\mathbf q}}}|^{q/2}
d\mu\Big)^\frac2q
\\ 
&\qquad +
 \Big(
\int_{\mathbf q} | \widehat{f_{\overset{\not\sim}{\mathbf
q}}}\widehat {g_{\mathbf q}}|^{q/2} d\mu \Big)^\frac2q+
 \Big(
\int_{\mathbf q} | \widehat{f_{\overset{\not\sim}{\mathbf
q}}}\widehat{g_{\overset{\not\sim}{\mathbf q}}}|^{q/2} d\mu\Big)^\frac2q\Bigg].
\end{aligned}\end{equation}  
By Lemma \ref{localization}, it follows that 
\[\Big( \int_{\mathbf q} | \widehat
f_{\mathbf q}\widehat g_{\mathbf q}|^{q/2} d\mu \Big)^\frac2q \le
 C \langle\mu\rangle_{\alpha}^{\frac 2q} R^{2\beta}R^{\delta(n-\frac{2\alpha}q-2\beta)}\|f_{\mathbf q}\|_2
\|g_{\mathbf q}\|_2.\]
Schwarz inequality and \eqref{l2sumq}
give
\begin{equation}\label{local}
\sum_{\mathbf q} \Big( \int_{\mathbf q} | \widehat
f_{\mathbf q}\widehat g_{\mathbf q}|^{q/2} d\mu \Big)^\frac2q \lesssim   C \langle\mu\rangle_{\alpha}^{\frac 2q} R^{2\beta+\epsilon}R^{\delta(n-\frac{2\alpha}q-2\beta)}\|f\|_2
\|g\|_2.
\end{equation}

For the other terms in \eqref{qube}
we use \eqref{bil2}.  Let $\eta$ be a Schwartz function such that $\widehat \eta(\xi)=1$ if
$|\xi|\le 10C$ and $\supp\widehat \eta\subset B(0,20C)$ and set  $\eta_R=R^{n+1} \eta(R\cdot)$.  Since $f_{\mathbf
q}\ast g_{\overset{\not\sim}{\mathbf q}}$ is supported in a ball of
radius $10R$,   
we have $ |\widehat {f_{\mathbf
q}}\widehat{g_{\overset{\not\sim}{\mathbf q}}} | =  |(\widehat
{f_{\mathbf q}}\widehat{g_{\overset{\not\sim}{\mathbf q}}}  )\ast \eta_R | 
\lesssim  ( |\widehat{f_{\mathbf q}}\widehat{g_{\overset{\not\sim}{\mathbf q}}}  |^{q/2} \ast \eta_R )^{2/q}$. 
By \eqref{convolution}, we thus obtain
 \begin{align*}
\Big( \int_{\mathbf q} | \eta_R \ast \big(\widehat {f_{\mathbf
q}}\widehat{g_{\overset{\not\sim}{\mathbf q}}}\big)|^{q/2}
d\mu \Big)^\frac2q & \le C\Big( \int_{\mathbf q} | \big(\widehat
{f_{\mathbf q}}\widehat{g_{\overset{\not\sim}{\mathbf
q}}}\big)|^{q/2} |\eta_R| \ast d\mu \Big)^\frac2q
\\
&\le C
\begin{cases}
 \|\widehat {f_{\mathbf
q}}\widehat{g_{\overset{\not\sim}{\mathbf q}}}\|_{L^2(\mathbf q)}
\|  |\eta_R| \ast d\mu\|_\infty^\frac12& \text { if } q=4\\
 \|\widehat {f_{\mathbf
q}}\widehat{g_{\overset{\not\sim}{\mathbf q}}}\|_{L^2(\mathbf q)} \|
|\eta_R| \ast d\mu\|_2& \text { if } q=2.
\end{cases}
\end{align*}
By \eqref{bil2} it follows that  for $2\le q\le 4$, 
\begin{align*} 
\Big( \int_{\mathbf q}  \big| \widehat {f_{\mathbf
q}}\widehat{g_{\overset{\not\sim}{\mathbf q}}}\big|^{q/2}
d\mu \Big)^\frac2q &\le C\|\widehat {f_{\mathbf
q}}\widehat{g_{\overset{\not\sim}{\mathbf q}}}\|_{L^2(\mathbf q)} \langle\mu\rangle_\alpha^{\frac2q}
R^{\frac{n+1-\alpha}2}
\\
&\le C \langle\mu\rangle_\alpha^{\frac2q} R^{\frac{3n+1-2\alpha}4+c\delta+\epsilon} \|f\|_2\|g\|_2.
\end{align*}
 The  terms 
$\widehat{f_{\overset{\not\sim}{\mathbf q}}}\widehat {g_{\mathbf
q}}$ and $\widehat{f_{\overset{\not\sim}{\mathbf
q}}}\widehat{g_{\overset{\not\sim}{\mathbf q}}}$ 
can be treated similarly. Since 
$ \#\{\mathbf q\}\le CR^{(n+1)\delta}$, 
combining the estimates for these terms, we get
\begin{align*}
\sum_{\mathbf q}  \Big( \int_{\mathbf q} | \widehat {f_{\mathbf
q}}\widehat{g_{\overset{\not\sim}{\mathbf q}}}|^{q/2}
d\mu\Big)^\frac2q &+
\Big(\int_{\mathbf q} | \widehat{f_{\overset{\not\sim}{\mathbf
q}}}\widehat {g_{\mathbf q}}|^{q/2} d\mu \Big)^\frac2q+
 \Big(
\int_{\mathbf q} | \widehat{f_{\overset{\not\sim}{\mathbf
q}}}\widehat{g_{\overset{\not\sim}{\mathbf q}}}|^{q/2} d\mu\Big)^\frac2q
 \\ &\le 
  C \langle\mu\rangle_\alpha^{\frac2q} R^{\frac{3n+1-2\alpha}4+{\tilde c \delta}+\epsilon}
\|f\|_2\|g\|_2.
\end{align*}
 Here $\tilde c = c +(n+1)$.  
By this and \eqref{local} it follows from \eqref{qube} that for any $\epsilon>0$, 
 \begin{align}\label{imply} 
\Big(\int |\widehat f\,\,\widehat g|^{q/2} d\mu \Big)^\frac2q
&\le C  \langle\mu\rangle_\alpha^{\frac2q} R^{\max \left\{ 2\beta + \delta\left( n-\frac{2\alpha}q-2\beta \right),\, \frac{3n+1-2\alpha}4 + \tilde c \delta \right\}+\epsilon}  \|f\|_2\|g\|_2. 
\end{align}

Hence we have shown  that $\eqref{bi}$ implies \eqref{imply}.   
If we have \eqref{bi} for $\beta= \beta_i$, then we see that \eqref{bi} holds with
\[ 
\beta  = \frac{3n +1 -2\alpha}{8} + \tilde c \left( \frac{8\beta_i - 3n-1+2\alpha}{8(\tilde c - n +2\alpha/ q +2\beta_i)} \right) := \beta_{i+1}
\]
by choosing $\delta = \frac{8\beta - 3n-1+2\alpha}{4(\tilde c - n +2\alpha/ q +2\beta)}$.
Iterating this implication we obtain a sequence $\{\beta_i\}_{i=0}^\infty$.  
Note that the sequence  $\{\beta_i\}_{i=0}^\infty$ is strictly decreasing  as long as 
$- n +2\alpha/ q +2\beta_i > 0$ i.e. $\beta_i > n/2 - \alpha/q$.
Since $\beta_i$ converges to $(3n+1 -2\alpha)/8$,
we conclude that \eqref{bi} holds for $2\le q\le 4$ and for any $\beta > \max\{ \frac{n}2-\frac{\alpha} q, \,\frac{3n+1-2\alpha}{8} \}$.
This completes the proof.
\end{proof}

\section{Proof of Theorem \ref{thecone}: Rescaling}\label{sec:rescaling}

In this section we prove the following theorem by deducing  linear estimate from the bilinear one \eqref{bi}.

\begin{theorem}\label{thecone1} 
 Let $n\ge3$ and $\mu$ be an $\alpha$-dimensional measure supported  in $\overline{B(0,1)}$.
Then, for $f$ supported in $\Gamma_R(1)$ there exists a constant $C>0$ such that \eqref{frac} holds for $s$ satisfying
\begin{align*}
s  > \widetilde s (\alpha,q,d ) :=
\begin{cases}
\,    \max \{\frac {n}2-\frac\alpha q, \frac{n+1}4,\frac{3n+1}8-\frac\alpha 4\}, &\text{if } 0<\alpha\le1,\\
\,          \max\{\frac {n}2-\frac\alpha q, \frac{n+1}4+\frac{1-\alpha}{2q},\frac{3n+1}8-\frac\alpha 4\}, &\text{if } 1 <\alpha\le n,\\
\, \max \{ \frac {n}2-\frac\alpha q, \frac{n+1}4+\frac{n+1-2\alpha}{2q},\frac{n+1-\alpha}{2} \},
             & \text{if } n <\alpha \le n+1.
           \end{cases}
\end{align*}
\end{theorem}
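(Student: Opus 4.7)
The plan is to derive Theorem~\ref{thecone1} from the bilinear estimate of Theorem~\ref{bifrac} by a Whitney-type angular decomposition of the Fourier support combined with a parabolic rescaling of the cone. For each dyadic $\theta \in [R^{-1/2},1]$, partition $\Gamma_R(1)$ into angular sectors $\{\tau\}$ of aperture $\sim\theta$ and decompose $f=\sum_\tau f_\tau$; the standard Whitney identity yields
\[
|\widehat f(x)|^2 = \sum_{\theta\,\mathrm{dyadic}} \sum_{\tau\sim_\theta\tau'} \widehat{f_\tau}(x)\,\overline{\widehat{f_{\tau'}}(x)} + (\text{finest-scale diagonal at }\theta\sim R^{-1/2}),
\]
where $\tau\sim_\theta\tau'$ denotes pairs of $\theta$-sectors whose angular separation is $\sim \theta$. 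The finest-scale diagonal is absorbed via $L^2$ almost-orthogonality, since at that scale the angular and normal dimensions of a sector coincide.

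For each pair $(\tau,\tau')$ at scale $\theta$, I would apply a parabolic rescaling adapted to the cone that maps $\tau\cup\tau'$ to a $\sim 1$-angularly separated pair inside $\Gamma_{R\theta^2}(1)$, reducing the problem to Theorem~\ref{bifrac} at the new scale $R\theta^2$. The essential observation, parallel to the one used in the proof of Lemma~\ref{localization}, is that the push-forward of $\mu$ under the corresponding physical-side map remains $\alpha$-dimensional with constant $\lesssim \langle\mu\rangle_\alpha$: the inverse rescaling has diagonal entries bounded by a universal constant, so the preimage of any ball is contained in a ball of the same radius. Tracking the Jacobian together with the change $R \to R\theta^2$ in the bilinear scale yields, for each pair, a bound of the form
\[
\|\widehat{f_\tau}\,\widehat{f_{\tau'}}\|_{L^{q/2}(d\mu)} \lesssim R^{2\beta(\alpha,q)}\,\theta^{\gamma(\alpha,q,n)}\,\langle\mu\rangle_\alpha^{2/q}\,\|f_\tau\|_2\,\|f_{\tau'}\|_2
\]
for an explicit exponent $\gamma(\alpha,q,n)$ coming from the Jacobian and the counting at the scale $\theta$.

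Summing over the $\theta$-separated pairs via $\sum_\tau \|f_\tau\|_2^2 \lesssim \|f\|_2^2$ together with the fact that each $\theta$-sector has $O(1)$ $\theta$-neighbors, and then summing dyadically over $\theta$, the total $R$-exponent is controlled by one of the two endpoints $\theta\sim R^{-1/2}$ or $\theta\sim 1$. The fine-scale endpoint produces the ``linear'' terms $(n+1)/4$, $(n+1)/4 + (1-\alpha)/(2q)$, or $(n+1)/4 + (n+1-2\alpha)/(2q)$ as $\alpha$ ranges over $(0,1]$, $(1,n]$, or $(n,n+1]$, respectively, while the coarse-scale endpoint produces the bilinear term $(3n+1)/8 - \alpha/4$ for $\alpha\le n$ and $(n+1-\alpha)/2$ for $n<\alpha\le n+1$. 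Combined with the Knapp contribution $n/2-\alpha/q$ already present in $\beta(\alpha,q)$, this reproduces $\widetilde s(\alpha,q,n)$ in each of the three ranges of $\alpha$.

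The main obstacle is the careful bookkeeping of the exponent $\gamma(\alpha,q,n)$ and the subsequent case analysis identifying the dominant endpoint of the $\theta$-sum. In particular, in the range $1<\alpha\le n+1$ the naive summation must be refined so as to exploit the $\alpha$-dimensional structure of $\mu$ when summing over $\theta$-neighbors; this refinement is what produces the corrections $(1-\alpha)/(2q)$ and $(n+1-2\alpha)/(2q)$ to the ``linear'' term in $\widetilde s$, and accounts for the transitions at $\alpha = 1$ and $\alpha = n$. Verifying that the push-forward measure retains an $\alpha$-dimensional bound uniform in $\theta$ is the geometric ingredient that ultimately makes the scale-by-scale scheme match the conjectured exponent.
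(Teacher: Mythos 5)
Your overall strategy matches the paper's: Whitney angular decomposition, anisotropic (null-coordinate) rescaling of each transverse pair down to $\Gamma_{R\theta^2}$, application of Theorem~\ref{bifrac} at the rescaled frequency, and a separate treatment of the $R^{-1/2}$-scale diagonal. However, there is a genuine gap in your key quantitative claim about the pushed-forward measure. You assert that $\langle\mu_j\rangle_\alpha\lesssim\langle\mu\rangle_\alpha$ because the inverse rescaling carries a ball into a ball of the same radius. This is true but too weak, and it is not what the argument needs. With only this bound, the contribution to $\gamma$ in Lemma~\ref{rescaling} would be $-4\beta+(n+1)$ for every $\alpha$, and the fine-scale endpoint of the $\theta$-sum would always give exponent $(n+1)/4$; the corrections $(1-\alpha)/(2q)$ (for $1<\alpha\le n$) and $(n+1-2\alpha)/(2q)$ (for $n<\alpha\le n+1$) would never appear. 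The actual source of these corrections is that $T_j^{-1}(B(x,\rho))$ is not a ball but a rectangle of dimensions $2^{-j}\rho\times\cdots\times2^{-j}\rho\times\rho\times2^{-2j}\rho$; covering this rectangle by cubes of side $2^{-j}\rho$ or $2^{-2j}\rho$ and invoking the $\alpha$-dimensional hypothesis on each cube yields
\[
\langle\mu_j\rangle_\alpha\lesssim\langle\mu\rangle_\alpha\,\min\bigl\{2^{(n+1-2\alpha)j},\,2^{(1-\alpha)j},\,1\bigr\},
\]
and this $j$-dependent gain, fed into the bilinear estimate before summing over $j$, is precisely what produces the three regimes of $\widetilde s$. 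You do acknowledge at the end that a refinement is required, but you misplace it in the summation over $\theta$-neighbors; the gain is in the measure estimate itself, not in the combinatorics. Without articulating the rectangle-covering step, the argument cannot reach the stated exponents for $\alpha>1$.

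A smaller inaccuracy: the term $(n+1-\alpha)/2$ in the range $n<\alpha\le n+1$ does not arise from the coarse-scale endpoint of the Whitney sum. The bilinear machinery only gives $(3n+1)/8-\alpha/4$ there; the improvement to $(n+1-\alpha)/2$ comes from a separate trivial bound for $q\le2$ using Plancherel and Lemma~\ref{fractal} applied directly to $\|\widehat f\|_{L^2(d\mu)}$, which beats the bilinear exponent precisely when $n\ge3$ and $\alpha>n$.
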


For $n=3$, this immediately implies Theorem \ref{thecone}.
\begin{proof}[Proof of Theorem \ref{thecone}]
It suffices to check that $\widetilde s (\alpha,q,3) =  s(\alpha,q,3)$ for $ 0 <\alpha \le 3$.  
If $0<\alpha\le 1$,  it is easy to check 
$\widetilde s (\alpha, q, 3) = n/2 - \alpha/q = s(\alpha,q,3)$ for $q \ge 2$.
If $1<\alpha\le 3$, we see that $\widetilde s(\alpha,q,3) = s(\alpha,q,3)$ because $(3n+1-2\alpha)/8 = (n+2-\alpha)/4$ when $n=3$.
This completes the proof.
\end{proof}

To prove Theorem \ref{thecone1}, we begin with a Whitney type decomposition to exploit bilinear estimate. (See \cite{tvv,w3}.)

\subsubsection*{Whitney type decomposition of $\mathbb S^{n-1}\times \mathbb S^{n-1}$}
For each $j\ge 1$, let us dyadically divide the sphere
$\mathbb S^{n-1}$ into $O(2^{(n-1)j})$  
caps ${\theta^j_k}$ of diameter $\sim 2^{-j}$.
We will write $\theta^j_k\approx \theta^j_{k^\prime}$ to mean that
$\theta^j_k$ and $\theta^j_{k^\prime}$ are not adjacent but have
adjacent parent caps of diameter $\sim$ $2^{1-j}$. 
Then, by Whitney  decomposition of $\mathbb S^{n-1}\times \mathbb S^{n-1}$  away from its diagonal $D = \{ (x,x) : x \in \mathbb S^{n-1} \}$
we have 
\[
\mathbb S^{n-1}\times \mathbb S^{n-1}\setminus D
=\bigcup{}_{ j\ge 1 } \bigcup{}_{(k,k^\prime):\theta^j_k\approx
\theta^j_{k^\prime}}\theta^j_k\times \theta^j_{k^\prime}.
\] 
Let $D(R^{-1/2})$ be a $O(R^{-1/2})$ neighborhood of $D$.
Then $\mathbb S^{n-1}\times \mathbb S^{n-1}\setminus D(R^{-1/2})$ can be covered by $\theta^j_k\times \theta^j_{k^\prime}$ such that $\theta^j_k\approx \theta^j_{k^\prime}$ and $1\le j \le \log R^{1/2}$.
Also $D(R^{-1/2})$ is covered by a union of disjoint cubes $\theta^{j_\circ}_k\times \theta^{j_\circ}_{k}$ of sidelength $O(R^{-1/2})$. So, $2^{-j_\circ}\sim 1/{\sqrt R}$.

\

For $f$ supported away  from the origin we set
\[f_k^j(\xi,\tau)=\chi_{\theta_k^j}({\xi}/{\tau}) f(\xi,\tau), \quad (\xi,\tau)\in \mathbb R^n\times\mathbb R.\]
Then by the above decomposition we  have
\begin{equation}\label{decomp}
| \widehat f |^2 \le \sum_{j=1}^{\log R^{1/2}}\sum_{(k,k^\prime):\theta^j_k\approx \theta^j_{k^\prime}}
|\widehat {f_k^j}\widehat {f_{k'}^j}| + \sum_{k} |\widehat{f_k^{j_\circ}}|^2.
\end{equation}

Note that  the diameters of $\asupp f_k^j$ and $\asupp f_{k'}^j$ are $O(2^{-j})$. In order to handle the first sum we need the following.


\begin{lem}\label{rescaling}  
Let $1\le j \le \log(R^{1/2})$.
Suppose $f$ and $g$ are supported in $\Gamma_R(1)$  and the diameters of  $\asupp f$ and $\asupp g\le 2^{-j-2}$. If 
$
\dist (\asupp  f,  \asupp  g) \ge 2^{-j},
$
there exists a constant $C>0$ such that
\begin{equation}\label{sep}
\| \widehat {f}\widehat {g}\|_{L^{\frac q2}(d\mu)}\le
C \langle \mu\rangle_\alpha^{\frac 2 q} \, R^{2\beta}\, 2^{\gamma j} \,\|f\|_2 \|g\|_2,
\end{equation} 
where
$\beta > \beta(\alpha,q)$ and 
\[\gamma=\gamma(\alpha,\beta, n)=\begin{cases}
-4\beta+ 2(n+1-2\alpha)/q+n+1, &\text{ if } n< \alpha \le n+1,\\
-4\beta + 2(1-\alpha)/q +n+1,  &\text{ if } 1\le \alpha \le n,\\
-4\beta+n+1,  &\text{ if } 0<\alpha\le 1.
\end{cases}\]
\end{lem}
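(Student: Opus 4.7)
The plan is to exploit a parabolic (cone) rescaling that stretches the $2^{-j}$-cap into an $O(1)$-cap, reducing the bilinear estimate to the angular-separation-$\sim 1$ case of Theorem \ref{bifrac}.

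Since the angular supports have diameters at most $2^{-j-2}$ and separation at least $2^{-j}$, both lie within distance $\sim 2^{-j}$ of some common direction $e\in\mathbb S^{n-1}$; after rotating so that $e=e_n$ (which preserves $\|f\|_2, \|g\|_2$ and $\langle\mu\rangle_\alpha$), the plan is to apply the linear map
\[
\Lambda \xi = \bigl(2^{-j}\xi'',\, 2^{-2j}\xi_n,\, \xi_{n+1} + (2^{-2j}-1)\xi_n\bigr)
\]
in $\xi = (\xi'', \xi_n, \xi_{n+1})\in\mathbb R^{n-1}\times\mathbb R\times\mathbb R$. Using the approximation $\xi_{n+1} \approx \xi_n + |\xi''|^2/(2\xi_n)$ valid on the cap, one verifies that $\Lambda$ sends the relevant piece of $\Gamma_R(1)$ into $\Gamma_{R'}(1)$ with $R' := R\,2^{-2j}$, and that $|\det\Lambda|=2^{-j(n+1)}$ while $\|\Lambda^T\|_{\mathrm{op}}\lesssim 1$.

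Setting $\tilde f = |\det\Lambda|^{-1/2} f\circ\Lambda^{-1}$ and $\tilde g$ analogously, we have $\|\tilde f\|_2 = \|f\|_2$ and $\widehat{\tilde f}(x) = |\det\Lambda|^{1/2}\widehat f(\Lambda^T x)$; $\tilde f, \tilde g$ are supported in $\Gamma_{R'}(1)$ with angular separation $\gtrsim 1$ (and at least $1/100$ after a further splitting if needed). Defining the pushforward $\nu = (\Lambda^{-T})_*\mu$, a change of variable gives
\[
\int |\widehat f\,\widehat g|^{q/2}\, d\mu = |\det\Lambda|^{-q/2}\int |\widehat{\tilde f}\,\widehat{\tilde g}|^{q/2}\, d\nu,
\]
and since $\nu$ has support in a ball of fixed radius, Theorem \ref{bifrac} applied at scale $R'$ (after a harmless isotropic normalization) yields, for any $\beta>\beta(\alpha,q)$,
\[
\Bigl(\int|\widehat{\tilde f}\,\widehat{\tilde g}|^{q/2}d\nu\Bigr)^{2/q} \le C\langle\nu\rangle_\alpha^{2/q}(R')^{2\beta}\|\tilde f\|_2\|\tilde g\|_2.
\]

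The heart of the proof is the estimate of $\langle\nu\rangle_\alpha$. For any ball $B(x_0,\rho)$, $\nu(B(x_0,\rho)) = \mu(\Lambda^T B(x_0,\rho))$, and the image $\Lambda^T B(x_0,\rho)$ is an ellipsoid with semi-axes of order $2^{-j}\rho$ (repeated $n-1$ times), $\rho$, and $2^{-2j}\rho$. The plan is to cover this ellipsoid by Euclidean balls of an auxiliary radius $r$ and apply the $\alpha$-dimensionality of $\mu$, with $r$ optimized over three regimes: for $0<\alpha\le 1$ take $r\sim\rho$ (an $O(1)$-ball cover, giving $\langle\nu\rangle_\alpha \lesssim \langle\mu\rangle_\alpha$); for $1\le\alpha\le n$ take $r = 2^{-j}\rho$ (needing $\sim 2^j$ balls from subdividing the $\rho$-direction, giving $\langle\nu\rangle_\alpha \lesssim 2^{j(1-\alpha)}\langle\mu\rangle_\alpha$); for $n\le\alpha\le n+1$ take $r = 2^{-2j}\rho$ (needing $\sim 2^{j(n+1)}$ balls, giving $\langle\nu\rangle_\alpha \lesssim 2^{j(n+1-2\alpha)}\langle\mu\rangle_\alpha$). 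This produces $\langle\nu\rangle_\alpha \lesssim 2^{j s_\alpha}\langle\mu\rangle_\alpha$ with $s_\alpha \in \{0,\,1-\alpha,\,n+1-2\alpha\}$ in the respective ranges.

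Combining the above with $|\det\Lambda|^{-1}=2^{j(n+1)}$ and $(R')^{2\beta}=R^{2\beta}2^{-4\beta j}$ yields \eqref{sep} with $\gamma = -4\beta + (n+1) + 2 s_\alpha/q$, which matches the three formulas in the statement. The main technical obstacle will be the case-by-case covering argument for $\langle\nu\rangle_\alpha$: a naive operator-norm bound on $\Lambda^T$ only gives $\langle\nu\rangle_\alpha \lesssim \langle\mu\rangle_\alpha$, which is insufficient for $\alpha>1$, and the improved factors $2^{j(1-\alpha)}$ and $2^{j(n+1-2\alpha)}$ arise only by exploiting the anisotropy of the ellipsoid $\Lambda^T B(x_0,\rho)$, whose semi-axes differ by up to a factor of $2^{2j}$.
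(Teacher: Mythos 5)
Your high-level plan is the same as the paper's: apply an anisotropic linear change of variables that blows the $2^{-j}$-cap up to an $O(1)$-cap, push the measure forward, apply Theorem~\ref{bifrac} at the reduced frequency scale $R'=2^{-2j}R$, and estimate $\langle\nu\rangle_\alpha$ by covering the anisotropic image of a ball. The covering argument, the identification of the three regimes of $\alpha$, and the final exponent bookkeeping $\gamma=-4\beta+(n+1)+2s_\alpha/q$ all match the paper exactly. However, the specific map $\Lambda$ you chose does not do what you claim: it does not send $\Gamma_R(1)$ into $\Gamma_{R'}(1)$.

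The problem is that $\Lambda$ preserves the osculating paraboloid $\xi_{n+1}=\xi_n+|\xi''|^2/(2\xi_n)$, but not the cone. A direct computation shows that if $\xi_{n+1}^2=\xi_n^2+|\xi''|^2$ then, with $\tilde\xi=\Lambda\xi$,
\[
\tilde\xi_{n+1}^2-\tilde\xi_n^2-|\tilde\xi''|^2=(1-2^{-2j})(\xi_{n+1}-\xi_n)^2,
\]
and on the cap one has $\xi_{n+1}-\xi_n\sim 2^{-2j}R$. Hence $\tilde\xi_{n+1}-|\tilde\xi'|\sim 2^{-2j}R=R'$ for directions at angle $\sim 2^{-j}$ from the axis; equivalently, in the rescaled variables the image is $\tilde\xi_{n+1}=\tilde\xi_n+|\tilde\xi''|^2/(2\tilde\xi_n)+O(2^{-2j}R')$, which for $|\tilde\xi''|\sim\tilde\xi_n$ differs from $\sqrt{\tilde\xi_n^2+|\tilde\xi''|^2}$ by $\Theta(R')$. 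The "approximation $\xi_{n+1}\approx\xi_n+|\xi''|^2/(2\xi_n)$" is accurate only on the original narrow cap; after rescaling to angular width $\sim 1$ the discarded quartic term is $\Theta(R')$, not $O(1)$. So $\tilde f,\tilde g$ are supported in a paraboloid neighborhood of thickness $\sim R'$, not in $\Gamma_{R'}(1)$, and Theorem~\ref{bifrac} cannot be applied to them. (Your observation that $\|\Lambda^T\|_{\mathrm{op}}\lesssim 1$ is true but irrelevant to this issue.)

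The fix is to use the null coordinates $\sigma=(\xi_{n+1}-\xi_n)/\sqrt2$, $\tau=(\xi_{n+1}+\xi_n)/\sqrt2$, in which the cone is the exact quadric $|\xi''|^2=2\sigma\tau$, and take the map $(\xi'',\sigma,\tau)\mapsto(2^{-j}\xi'',\sigma,2^{-2j}\tau)$. This preserves $\{|\xi''|^2=2\sigma\tau\}$ identically, has the same determinant $2^{-j(n+1)}$, and its transpose sends a $\rho$-ball into an ellipsoid with the same semi-axes $2^{-j}\rho$ ($n-1$ times), $\rho$, $2^{-2j}\rho$ that you use in your covering argument. With this replacement the rest of your argument (the pushforward identity, the three-regime covering estimate for $\langle\nu\rangle_\alpha$, and the final combination) goes through verbatim, and is essentially identical to the paper's proof.
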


\begin{proof}
By rotation we may assume that
 $\asupp f$ and $\asupp g$ are contained in 
$O(2^{-j})$ neighborhood of $ (0,\dots, 0,\frac 1{\sqrt 2}, \frac1{\sqrt 2}) \in \mathbb R^{n+1}$. 
More precisely, we
use the null coordinates
\[\xi''=(\xi_1,\xi_2, \dots, \xi_{n-1}), \, \sigma=\frac{\xi_{n+1}-\xi_n}{\sqrt 2},\, \tau=\frac{\xi_{n+1}+\xi_n}{\sqrt 2}.\]
Then the cone $\Gamma_R$ can be written as $|\xi''|^2=2\sigma\tau$. So
\[\Gamma_{R}(1)\subset\{(\xi'',\sigma,\tau):  |\xi''|^2=2\sigma\tau+ O(1),\, \tau\sim R \}.\]
Additionally, after a slight adjustment we may assume that $\supp f \subset V_1$ and $\supp g \subset V_2$, 
where 
\[
V_i = \{(\xi'',\sigma,\tau):  |\xi''|^2=2\sigma\tau+O(1),\, \tau\sim R,\,
|\xi''/\tau +(-1)^i 2^{-j}e_1|\le 2^{-j-2}\}, \,\, i=1,2. 
\]

Now, using  an anisotropic transformation $T_j : (\xi'',\sigma,\tau) \mapsto (2^{j}\xi'',\sigma,2^{2j}\tau)$,
we have
\begin{align*}
\|  \widehat {f}\widehat {g}\|_{L^{q/2}( d\mu)}=\| \widehat {f_j}\widehat
{g_j}\|_{L^{q/2}( d\mu_j)},
\end{align*}
where $f_j=|\det T_j| \,  f \circ T_j = 2^{(n+1)j} f \circ T_j  $,  $g_j=|\det T_j| \,  g\circ T_j $, 
and 
\[ \mu_j(\phi)=\int \phi\circ T_j \, d\mu.\]
So,  we see 
\[\supp f_j
\subset\Big\{(\xi'',\sigma,\tau):
|\xi''|^2=2\sigma\tau + O(2^{-2j}),\, \tau\sim 2^{-2j}R,\,
|\xi''/\tau - e_1|\le 2^{-2} \Big\}
\]
and 
\[\supp g_j
\subset\Big\{(\xi'',\sigma,\tau):
|\xi''|^2=2\sigma\tau + O(2^{-2j}),\, \tau\sim 2^{-2j}R,\,
|\xi''/\tau + e_1|\le 2^{-2} \Big\}.
\]

Clearly,  $\mu_j$ is supported in $\overline{B(0,1)}$ because $j\ge 1$, and  note that $f$ and $g$ are supported in $\Gamma_{2^{-2j}R}(2^{-2j})$ and satisfy the separation condition \eqref{angular-separation} in Theorem \ref{bifrac}, by which we have  that, for $ \beta > \beta(\alpha,q)$, 
\be\label{scaled}
\| \widehat {f_j}\widehat {g_j}\|_{L^{q/2}( d\mu_j)} \le C \langle \mu_j\rangle_\alpha^{\frac 2q} (2^{-2j}R)^{2\beta} \| f_j\|_2 \| g_j\|_2. 
\ee

We now estimate  $\langle \mu_j\rangle_{\alpha}$.  For any $( x,\rho) \in \mathbb R^{n+1}\times \mathbb R$, we have
\begin{align*} 
\mu_j( B(x,\rho))&=\int_{T_j^{-1} B(x,\rho)} d\mu
\lesssim \int_{\mathcal R} d\mu, 
\end{align*}
where $\mathcal R$ is a rectangle of dimensions $ 2^{-j}\rho \times2^{-j}\rho\times\cdots \times2^{-j}\rho\times \rho\times 2^{-2j}\rho$.
There are three different  ways of decomposing  $\mathcal R$ into cubes, namely, cubes  of side length $2^{-2j}\rho$, $2^{-j}\rho$, and $\rho$, respectively. Considering these three cases  and 
using  \eqref{def-measure} for each case,  we obtain 
\begin{align*}
 \mu_j(B(x,\rho))
&\lesssim \langle\mu\rangle_\alpha \,\min\{ 2^{(n+1-2\alpha)j}\rho^\alpha, \,2^{(1-\alpha)j}\rho^\alpha,\, \rho^\alpha \} \\
&= \langle\mu\rangle_\alpha \times \begin{cases}
\, 2^{(n+1-2\alpha)j}\rho^\alpha, &\text{ if }  n< \alpha \le n+1,\\
\, 2^{(1-\alpha)j}\rho^\alpha,    &\text{ if } 1<\alpha\le n,\\
\, \rho^\alpha,    &\text{ if }  0< \alpha \le 1.
\end{cases}
\end{align*}
Hence, $\langle \mu_j\rangle_\alpha \lesssim   \,\min\{ 2^{(n+1-2\alpha)j} , \,2^{(1-\alpha)j} ,\, 1 \} \times\langle\mu\rangle_\alpha$. 
Combining this with \eqref{scaled},  $\|f_j\|_2 = 2^{j(n+1)/2}\|f\|_2$ and $\|g_j\|_2 = 2^{j(n+1)/2}\|g\|_2$ yields  \eqref{sep}. 
\end{proof}

To handle the second sum in \eqref{decomp}, which is easier,  we use the following. 
\begin{lem}\label{diagonal}
	Suppose that $f$ is supported in $\Gamma_R(1)$  and the diameter of $\asupp f$ is $O(R^{-1/2})$. 
For $q \ge 2$, there exists a constant $C>0$ such that
\be\label{diagonaleq} 
\| \widehat {f}\|_{L^{q}(d\mu)}\le
C \langle \mu\rangle_\alpha^{\frac 1 q} \, R^{\beta_\circ(\alpha,q)}\, \|f\|_2,
\ee
where
\[\beta_\circ(\alpha,q) = \begin{cases}
(n+1)/4+(n+1-2\alpha)/2q, &\text{ if } n< \alpha \le n+1 ,\\
(n+1)/4+(1-\alpha)/2q,  &\text{ if } 1\le \alpha \le n,\\
(n+1)/4,  &\text{ if } 0<\alpha\le 1.
\end{cases}\]
\end{lem}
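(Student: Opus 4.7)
I would prove \eqref{diagonaleq} at the two endpoints $q=\infty$ and $q=2$, and then interpolate by Riesz--Thorin; a direct check gives $\beta_\circ(\alpha,q) = (1-\tfrac{2}{q})\tfrac{n+1}{4} + \tfrac{2}{q}\beta_\circ(\alpha,2)$ and the matching interpolation of $\langle\mu\rangle_\alpha^{1/q}$ between $1$ (at $q=\infty$) and $\langle\mu\rangle_\alpha^{1/2}$ (at $q=2$), so this reproduces exactly the claimed estimate for every $2 \le q \le \infty$. The $q=\infty$ endpoint is trivial: after a rotation, the hypotheses place $\supp f$ inside a plate $P \subset \mathbb R^{n+1}$ of dimensions $1 \times R^{1/2} \times \cdots \times R^{1/2} \times R$ (one direction normal to the cone, $n-1$ directions tangential on the sphere, one direction radial), so $|P| \lesssim R^{(n+1)/2}$ and Cauchy--Schwarz yields
\[
\|\widehat f\|_{L^\infty(d\mu)} \le \|f\|_1 \le |P|^{1/2}\|f\|_2 \lesssim R^{(n+1)/4}\|f\|_2.
\]

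For the $L^2(d\mu)$ endpoint I would use a $TT^*$/Schur argument. Pick a non-negative Schwartz cutoff $\phi_P$ with $\phi_P = 1$ on $\supp f$ that is adapted to a mild enlargement of $P$; then $\widehat{\phi_P^2}$ has height $\sim |P| = R^{(n+1)/2}$ and is rapidly concentrated on the dual plate $P^* \subset \mathbb R^{n+1}$ of sides $1 \times R^{-1/2} \times \cdots \times R^{-1/2} \times R^{-1}$. A direct computation shows that the operator $Th = \widehat{\phi_P h}$ from $L^2(dx)$ to $L^2(d\mu)$ has $TT^*$-kernel essentially $\widehat{\phi_P^2}(x-y)$, so Schur's test gives
\[
\|\widehat f\|_{L^2(d\mu)}^2 \le \|T\|^2 \|f\|_2^2 \lesssim \Big(\sup_x \int |\widehat{\phi_P^2}(x-y)| d\mu(y)\Big)\|f\|_2^2.
\]
Using the pointwise domination $|\widehat{\phi_P^2}| \lesssim |P| \sum_{k \ge 0} 2^{-Nk} \chi_{2^k P^*}$ (valid for any fixed $N$ by Schwartz decay) to absorb the tails, the Schur sum reduces to $R^{(n+1)/2}\cdot\sup_x \mu(x + P^*)$ up to a convergent geometric factor once $N > \alpha$. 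I then apply the growth hypothesis \eqref{def-measure} by covering $P^*$ with balls at the scale optimal for the given $\alpha$: a single ball of radius $\sim 1$ when $0 < \alpha \le 1$, giving $\mu(x+P^*) \lesssim \langle\mu\rangle_\alpha$; about $R^{1/2}$ balls of radius $R^{-1/2}$ when $1 < \alpha \le n$, giving $\langle\mu\rangle_\alpha R^{(1-\alpha)/2}$; and about $R^{(n+1)/2}$ balls of radius $R^{-1}$ when $n < \alpha \le n+1$, giving $\langle\mu\rangle_\alpha R^{(n+1)/2 - \alpha}$. Multiplying by $|P| = R^{(n+1)/2}$ yields $\|\widehat f\|_{L^2(d\mu)}^2 \lesssim \langle\mu\rangle_\alpha R^{2\beta_\circ(\alpha,2)}\|f\|_2^2$ in each range of $\alpha$.

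The main obstacle is the case analysis in the covering step: the dual plate $P^*$ has three distinct characteristic length scales ($1$, $R^{-1/2}$, $R^{-1}$), and the optimal covering radius jumps between these scales precisely at $\alpha = 1$ and $\alpha = n$, which is exactly what produces the three-piece definition of $\beta_\circ(\alpha,q)$. Once the plate geometry is understood the remainder is routine bookkeeping, and Riesz--Thorin interpolation between the two endpoint bounds closes the proof.
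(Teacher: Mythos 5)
Your proposal is correct, but it reaches the estimate by a different route than the paper's. The paper proves \eqref{diagonaleq} for all $q\ge2$ in one pass: it takes the affine map $T_R$ sending the unit cube $Q$ onto the plate $\mathcal R$ supporting $f$, so that $\widehat f(x) = R^{(n+1)/2}\widehat{f_R}(T_R x)$ with $f_R = f\circ T_R$ supported in $Q$; it then pushes $\mu$ forward to $\mu_R$, uses the locally constant property $|\widehat{f_R}|\lesssim(|\widehat{f_R}|^q\ast|\widehat{\eta_Q}|)^{1/q}$, estimates $\||\widehat{\eta_Q}|\ast d\mu_R\|_\infty\lesssim\langle\mu_R\rangle_\alpha$ by Lemma~\ref{fractal}, and closes with Hausdorff--Young and H\"older on the unit cube; the three-case structure of $\beta_\circ$ then emerges from bounding $\langle\mu_R\rangle_\alpha$ under the anisotropic dilation exactly as in the proof of Lemma~\ref{rescaling} with $2^j\sim\sqrt R$. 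You instead establish the two endpoints separately---$q=\infty$ by Cauchy--Schwarz on the plate, and $q=2$ by a $TT^*$/Schur argument reducing to $\sup_x\mu(x+P^*)$, estimated by optimal ball coverings of the dual plate---and then interpolate by Riesz--Thorin. Both arguments are correct and both isolate the same geometric information (the three characteristic scales $1$, $R^{-1/2}$, $R^{-1}$ of $P^*$), but the bookkeeping differs: the paper's rescaling approach packages the covering estimate into $\langle\mu_R\rangle_\alpha$ and reuses it verbatim from Lemma~\ref{rescaling}, while your endpoint-plus-interpolation approach is more modular and makes the $q=2$ mechanism ($TT^*$ against the dual plate) explicit, at the price of having to verify the covering estimate separately and then check that $\beta_\circ(\alpha,q)$ and the power of $\langle\mu\rangle_\alpha$ interpolate linearly in $2/q$. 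Your identities $\beta_\circ(\alpha,q)=(1-\tfrac2q)\tfrac{n+1}{4}+\tfrac2q\beta_\circ(\alpha,2)$ and $\langle\mu\rangle_\alpha^{1/q}=1^{1-2/q}(\langle\mu\rangle_\alpha^{1/2})^{2/q}$ are indeed correct, as is the absorption of the Schwartz tails once $N>\alpha$, so the argument is complete.
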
	
\begin{proof}
Note that $f$ is supported in a rectangle $\mathcal R$ of dimensions 
$C \overset{n-1 \text{ times } }{R^{1/2} \times  \cdots \times CR^{1/2 } }\times C\times CR$ for a positive constant $C$. 
We consider $T_R$ which takes the unit cube $Q$ onto $\mathcal R$. 
Then we have
\[
\widehat f(x) = R^{\frac{n+1}{2}} \int e^{-i (T_R x) \cdot y} f_R (y) dy,
\]
where $f_R := f\circ T_R $ is supported in the unit cube $Q$.

As before, let $\mu_R$ be the measure given by
$
\mu_R (\phi) = \int \phi\circ T_R \, d\mu
$ for continuous function $\phi$.
Since $f_R$ is supported in $Q$, $|\widehat{f_R}| \le C(|\widehat{f_R}|^q\ast |\eta_{Q}|)^{\frac1q}$ for any Schwartz function $\eta_Q$ satisfying   $\eta_Q=1$ on $Q$. 
Using Hausdorff-Young inequality, Lemma \ref{fractal},  and H\"older inequality  we have, for $q \ge 2$, 
\begin{align*}
&\qquad\qquad \| \widehat f \,\|_{L^q(d\mu)} \lesssim R^{\frac{n+1}{2}} \| |\widehat{\eta_Q}|\ast d \mu_R\|_\infty^{\frac 1q} \, \| \widehat{f_R} \|_{L^q(\mathbb R^{n+1})}
\\&\lesssim R^{\frac{n+1}{2}} \langle \mu_R \rangle_\alpha^{\frac 1q} |\supp f_R|^{\frac12-\frac1q}  \,  \| {f_R} \|_{L^2(\mathbb R^{n+1})} 
\lesssim R^{\frac{n+1}{4}} \langle \mu_R \rangle_\alpha^{\frac 1q}  \, \| f \|_{L^2(\mathbb R^{n+1})}. 
\end{align*}
As in the proof of Theorem \ref{rescaling} (this  corresponds to the case $2^j\sim \sqrt R$), it is easy to see
\begin{align*}
 \langle \mu_R \rangle_\alpha^{\frac1q}
\lesssim \langle\mu\rangle_\alpha^{\frac1q} \times \begin{cases}
\, R^{\frac{n+1-2\alpha}{2q}}, &\text{ if }  n< \alpha \le n+1,\\
\, R^{\frac{1-\alpha}{2q}},    &\text{ if } 1<\alpha\le n,\\
\, 1,    &\text{ if }  0< \alpha \le 1.
\end{cases} 
\end{align*}
Therefore, combining these two estimates  gives \eqref{diagonaleq}.
\end{proof}


\begin{proof}[Proof of Theorem \ref{thecone1}]
By \eqref{decomp}, 
we have
\[
 \| \widehat f \|_{L^{q}(d\mu)}^2 \le \sum_{j=1}^{\log R^{1/2}}\sum_{(k,k^\prime):\theta^j_k\approx \theta^j_{k^\prime}}
\|\widehat {f_k^j}\widehat {f_{k'}^j}\|_{L^{q/2}(d\mu)} + \sum_{k} \|\widehat{f_k^{j_\circ}}\|_{L^{q}(d\mu)}^2 =: I + II.
\]

By Lemma \ref{rescaling}, for any $\epsilon >0$ and $R>1$, we have
\begin{align*}
I &\lesssim  \langle \mu \rangle_\alpha^{\frac 2q} R^{2\beta} \sum_{j=1}^{\log R^{1/2}} 2^{\gamma j} \sum_{(k,k^\prime):\theta^j_k\approx \theta^j_{k^\prime}} \| f_k^j\|_2 \| f_{k'}^j \|_2. 
\end{align*}
Hence,  by Schwarz inequality, we get 
\begin{align*}
 I \lesssim \langle \mu \rangle_\alpha^{\frac 2q} R^{2\beta} \times
\begin{cases}  \log R^{1/2}  \| f\|_2^2 \\
R^{\gamma /2} \log R^{1/2}  \| f\|_2^2 \end{cases}
\lesssim \langle \mu \rangle_\alpha^{\frac 2q} \times
\begin{cases}  R^{2\beta(\alpha,q) + \varepsilon}  \| f\|_2^2 , &\text{ if} \, \gamma \le 0, \\
R^{2 \beta_\circ(\alpha,q) + \varepsilon}  \| f\|_2^2 , &\text{ if} \, \gamma > 0. \end{cases}
\end{align*}
From Lemma \ref{diagonal}, we also see
\begin{align*}
II \lesssim \langle \mu\rangle_\alpha^{\frac2q} R^{2\beta_\circ(\alpha,q)} \sum_k \|f_k^{j_\circ}\|_2^2 \lesssim \langle \mu\rangle_\alpha^{\frac2q} R^{2\beta_\circ(\alpha,q)} \|f\|_2^2.
\end{align*}
Combining these estimates, we obtain that
for $ q \ge 2$, and for any $\epsilon>0$ and $R>1$, 
\begin{equation}\label{ndimension} 
\| \widehat f\|_{L^{q}(d\mu)}\lesssim
 \langle\mu\rangle_\alpha^{\frac 1q}\,R^{\widetilde s(\alpha,q,n) +\epsilon}\|f\|_2,
\end{equation}
where $\widetilde{s}(\alpha,q,n)=\max \{\beta(\alpha,q),\beta_\circ(\alpha,q)\}$. 
Note that
\[
\widetilde{s}(\alpha,q,n) =
\begin{cases}
\,    \max \{\frac {n}2-\frac\alpha q, \frac{n+1}4,\frac{3n+1}8-\frac\alpha 4\}, &\text{if } 0<\alpha\le1,\\
\,          \max\{\frac {n}2-\frac\alpha q, \frac{n+1}4+\frac{1-\alpha}{2q},\frac{3n+1}8-\frac\alpha 4\}, &\text{if } 1 <\alpha\le n,\\
\, \max \{ \frac {n}2-\frac\alpha q, \frac{n+1}4+\frac{n+1-2\alpha}{2q},\frac{3n+1}8-\frac\alpha 4 \},
             & \text{if } n <\alpha \le n+1.
           \end{cases}
\]

If $3\le n <\alpha$, we can improve $\widetilde{s}(\alpha,q,n)$ slightly. 
In fact, we use Plancherel theorem and Lemma \ref{fractal} so that, for $q \le 2$,  we have 
\begin{align*}
\|\widehat f\|_{L^q(d\mu)} \le \mu(\mathbb R^{n+1})^{\frac1q -\frac12} \|\widehat f\|_{L^2(d\mu)} \lesssim  \|\widehat f\|_{L^2} \| |\phi_R| \ast d\mu\|_\infty^{\frac12} \le  R^{\frac{n+1-\alpha}2}\|f\|_{L^2}. 
\end{align*}	
Since $\frac{ 3n+1}8-\frac\alpha 4 > \frac{n+1-\alpha}2$ if $ 3 \le n <\alpha \le n+1$, it follows that, for  $n <\alpha\le n+1$
\[
\widetilde{s}(\alpha,q,n) = \max \left\{ \frac {n}2-\frac\alpha q, \frac{n+1}4+\frac{n+1-2\alpha}{2q},\frac{n+1-\alpha}{2} \right\}.
\]
This completes the proof.
\end{proof}


\begin{proof}[Proof of Theorem \ref{wave}.] 
We now show that if \eqref{frac} holds for some $s=s_0$, then \eqref{fstrichartz} holds for $s>s_0$.  Since $\mu$ has compact support, by finite decomposition we may assume that $\mu$ is supported in $\overline{B(0,1)}$.   Recall that the solution $u(x,t)$ is given by
\begin{equation}\label{solution}
u(x,t) =  \frac{1}{(2\pi)^{n}}\int_{\mathbb R^n} e^{i x\cdot \xi} \cos (t |\xi|) \widehat f(\xi) d\xi
+  \frac{1}{(2\pi)^{n}} \int_{\mathbb R^n} e^{i x\cdot \xi} \sin (t |\xi|) \frac{\widehat g(\xi)}{|\xi|} d\xi.  
\end{equation}

Let $P_j$ be the projection operator $P_j$ for $j\ge1$ given by $
\widehat{ P_j f}(\xi) = \beta(| \xi | / 2^j) \widehat f(\xi),
$
where $\beta$ is a $C_0^\infty(\mathbb R)$ function which is supported in  $[1/2, 2]$ and $\sum_{j\in \mathbb Z}  \beta (|\xi|/2^j)=  1$ for $\xi\neq0$.
Also we define $P_{\le 0} f$ such that $\widehat{P_{\le 0}f}  = \beta_0 \widehat f$, where $\beta_0$ is a $C_0^{\infty}(\mathbb R)$ function such that $\beta_0(|\xi|) = 1- \sum_{j \ge 1} \beta(|\xi| / 2^j)$. 
We write 
\[u(x,t)=P_{\le 0}( u(\cdot,t))(x) +\sum_{j\ge 1} P_{j}( u(\cdot,t))(x).\] 
By Cauchy-Schwarz inequality and Plancherel theorem  we have
$|P_{\le 0}( u(\cdot,t))(x)| \lesssim \|f\|_2 +\|g\|_2.$
Since $\mu$ is supported in  $\overline{B(0,1)}$,  it follows that 
\[\|  P_{\le 0}( u(\cdot,t)) \|_{L^q(d\mu)} \lesssim  \|f\|_2 +\|g\|_2. \] 
So, in order to show  \eqref{fstrichartz} for $s>s_0$  it is sufficient to show that \eqref{frac} with $s=s_0$  implies 
\[\|  P_{j}( u(\cdot,t))  \|_{L^q(d\mu)} \lesssim    2^{s_0j}  \|f\|_2 + 2^{(s_0-1)j}\|g\|_2.\] 
By \eqref{solution} and time reversal symmetry this in turn follows from  
\begin{align}\label{decomp2}
\|  e^{i t \sqrt{-\Delta}} P_j  h  \|_{L^q(d\mu)} \le  2^{s_0j} \|h\|_2 .
\end{align}

Since $\mu$ is supported in $\overline{B(0,1)}$, as before, using the smooth function $\eta$ satisfying  $ \eta\sim 1$ on $\overline{B(0,1)}$ and $\supp \widehat\eta \subset
B(0,\frac12)$,  we have
  \begin{align*}
\|  e^{i t \sqrt{-\Delta}} P_j  h  \|_{L^q(d\mu)}\sim \|  \eta\, e^{i t \sqrt{-\Delta}} P_j  h  \|_{L^q(d\mu)}.
\end{align*}
Note that the space time Fourier transform of  $\eta\, e^{i t \sqrt{-\Delta}} P_j f (x)$ is supported in $\Gamma_{2^j}(1)$. Using \eqref{frac} with $s=s_0$  and Plancherel theorem  gives
  \begin{align*}
\|  e^{i t \sqrt{-\Delta}} P_j  h  \|_{L^q(d\mu)}\lesssim    2^{s_0j}  \|\eta\, e^{i t \sqrt{-\Delta}} P_j  h\|_{2} \lesssim   2^{s_0j} \|h\|_2 .
\end{align*}
Hence we get \eqref{decomp} and complete proof. 
\end{proof}

\section{Proof of Proposition \ref{prop:necessary}}\label{sec:sharpness}

Now, we obtain lower bounds on $s$ for which \eqref{frac} may hold. This is done by constructing suitable functions and measures.

\

Firstly we show that if \eqref{frac} holds, then 
\be \label{scaling}
s\ge \frac{n}2-\frac{\alpha}q. 
\ee 
Let $\mu$ be the measure given by
$d\mu(x)=\chi_{\overline{B(0,1)}}(x)|x|^{\alpha-n-1}dx$ and
$f=\chi_{\Gamma_R(1)}$. Then, $\mu$ is obviously $\alpha$-dimensional and $|\widehat{f}(x)|\gtrsim R^{n}$ if $|x|\le cR^{-1}$
with sufficiently small $c>0.$ Hence \eqref{frac} implies $
R^{n}R^{-\alpha/q}\le CR^sR^\frac {n}2$. So, letting $R \rightarrow \infty$ gives \eqref{scaling}. 

\

We  now show the second condition:
\begin{align} \label{second_1} s \ge
\begin{cases} 
\frac{n+1}{4},\, &\text{ if }\, 0 < \alpha \le 1,
\\
\frac{n+1}4 +\frac{1-\alpha}{2q},\, &\text{ if }\,1 < \alpha \le n, \\
\frac{n+1}4+ \frac{n+1-2\alpha}{2q},\, &\text{ if }\, n < \alpha \le n+1.
\end{cases}
\end{align}
Let us set 
\begin{equation}\label{P}
P=\Big\{(\xi_1,\xi'',\xi_{n+1}):
|\xi_1-\xi_{n+1}|\le \frac1{100},\, |{\xi''}|\le \frac{\sqrt{R}}{100},\, 
\frac54 R\le |\xi_1+\xi_{n+1}|\le \frac 32 R \Big \}.
\end{equation}
Here $\xi''\in \mathbb R^{n-1}$. Then $P$ is contained in $\Gamma_R(1)$. 
Let $x=(x_1, x'', x_{n+1})$ be the dual  variables of $(\eta_1,\xi'', 
\eta_{n+1})$, where $ \eta _1=(\xi_1+\xi_{n+1})/\sqrt 2$ and $\eta_{n+1}=(\xi_1-\xi_{n+1})/\sqrt 2$.

For a  given $\alpha$ let $\ell$ be a positive integer satisfying $\ell-1 <\alpha \le \ell$. 
We consider a measure $\mu$ defined by
\[
d \mu (x ) = \prod_{i=1}^{n+1-\ell} d\delta(x_i) |x_{n-\ell+2}|^{\alpha - \ell} d x_{n-\ell+2} d x_{n-\ell+3} \cdots d x_{n+1},
\]
where $\delta$ is the 1-dimensional delta measure and $x = (x_1,\dots,x_{n+1})\in\mathbb R^{n+1}$.
If $n <\alpha\le n+1$, we set
\[
d\mu(x) = |x_1|^{\alpha - n-1} dx_1 \cdots dx_{n+1}.
\]
Then it is easy to see that $\mu$ is $\alpha$-dimensional.  In fact, considering the delta measure,  $\mu(B(x_0,\rho)) \le C \rho^{\alpha -\ell +1} \cdot \rho^{\ell -1} = C \rho^\alpha$ for any $\rho >0$ and $x_0\in\mathbb R^{n+1}$.


Let $f = \chi_P$. Then we have $\| f \|_2 \lesssim R^{(n+1)/4}$. 
{We denote by} $P^*$ the {dual rectangle} of  $P$ of which dimensions are $C R^{-1} \times \overset{(n-1) \text{ times}}{C R^{-1/2}\times\cdots \times C R^{-1/2}} \times C $ for some constant $C$. 
{It follows that $|\widehat f(x) | \gtrsim R^{1+(n-1)/2}$ on a rectangle of which size is comparable to  $P^*$.}
{Hence,  \eqref{frac} gives 
\[
R^{1+\frac{n-1}2} \mu(P^*)^{\frac1q} \lesssim R^{s + \frac{n+1}4}.
\]
Letting $R \rightarrow \infty$, we obtain \eqref{second_1} because
\[
\mu(P^*) \approx 
\begin{cases}
\, 1,\,&\text{ if }\, 0<\alpha \le 1,\\
\, R^{-\frac 12(\alpha - \ell +1) }\times R^{-\frac12(\ell-2)},\,&\text{ if }\, 1< \alpha \le n,\\
\, R^{-(\alpha -n)}\times R^{-\frac{n-1}{2}},\,&\text{ if }\, n< \alpha\le n+1.
\end{cases}
\]}

\

{Finally we show that \eqref{frac} implies 
\begin{align} \label{third_1} s\ge
\begin{cases}
\,\frac{n+2}{4}-\frac{\alpha}{4},&\text{ if }\, 1< \alpha \le n,
\\
\, \frac{n+1}2-\frac{\alpha}{2},&\text{ if }\, n < \alpha \le n+1.
\end{cases}
\end{align}}

{The condition \eqref{third_1} can be obtained by} an adaptation of the example in \cite{er3} which was  based on
the one due to Wolff \cite{w2}. 

First we show \eqref{third_1} for $1 <\alpha \le n$. 
Let $\phi_P$ be a Schwartz function supported in $P$, where $P$ is given by \eqref{P}.
Let $N$ be an integer such that $ R^{\frac{\alpha-1}{2}}\sim  N$, and  let $v_1, \dots, v_N$ be the lattice points which are contained in $B_{n-1}(0,1)$ and separated by  distance $\sim R^{-\frac{\alpha-1}{2(n-1)}}$. 
Now we consider a Schwartz function $F$ supported in $\Gamma_R(1)$, which is given by
\[ F(\eta_1,\xi'', \eta_{n+1})= N^{-\frac12} \sum_{k=1}^N \phi_P(\xi)e^{iv_k\cdot\xi''}.\]
Here again, $\eta_1$ and $\eta_{n+1}$ are the coordinates defined as in the above. 
Note that $\widehat F$ is a sum of translations of $\widehat{\phi_P}$, i.e. $\widehat F = N^{-1/2} \sum_{k=1}^N \widehat{\phi_P}(x_1, x'' - v_k, x_{n+1})$. 
Since $1 < \alpha \le n$, we see $R^{-\frac{\alpha-1}{2(n-1)}}\ge R^{-\frac12}$, which implies that $P^* + v_k$'s are almost disjoint.
 By rapid decay of {$\widehat{\phi_P}$} outside of $P^*$ we see that $|\widehat
F|\gtrsim N^{-\frac12} R^\frac{n+1}2 \sim R^{\frac{2n+3-\alpha}4}$ on  $S
:= \bigcup_{k=1}^N(P^*+ v_k)$. Consider the measure $ d\mu
=R^{\frac{n+2-\alpha}2} \chi_{S}\,dx, $ which is an
$\alpha$-dimensional measure with $\langle\mu\rangle_\alpha\lesssim 1$ when
$1< \alpha\le n$.
In fact, for $R^{-1} \le \rho < R^{-1/2}$ and $x_\circ \in \mathbb R^{n+1}$ it is easy to see 
\begin{align*}
\mu(B(x_\circ,\rho))  \le R^{\frac{n+2-\alpha}2} |S\cap B(x_\circ,\rho)|
\le R^{\frac{n+2-\alpha}2} R^{-1} \rho^{n}
 \le \rho^{-(n-\alpha)}\rho^{n} = \rho^\alpha.
\end{align*}
The other cases $\rho<R^{-1}$, $R^{-1/2}<\rho \le 1$, and $\rho >1$ can be treated similarly.
Note that $ \int_S d\mu\sim 1$ and $\|F\|_2^2 \le N^{-1} \sum_{k=1}^N \|\phi_P\|_2^2 \le C R^{\frac{n+1}4}$.
Hence \eqref{frac} implies $R^{\frac{n+2-\alpha}2}\lesssim R^s$,
which gives \eqref{third_1}.


Now we proceed to show  \eqref{third_1} for $\alpha> n$.
Similarly as before, let $u_1, \dots, u_M$ be the lattice points
which are contained in $B_{n-1}(0,1)$ and separated by about
$R^{-\frac{2\alpha-n-1}{2(n+1)}}\gg R^{-\frac12}$ so that $M\sim
R^{\frac{(n-1)(2\alpha-n-1)}{2(n+1)}}$. Under the same assumption, let  $w_1, \dots, w_L$ be
the lattice points which are contained in $(-1/100,1/100)$ and
separated by $R^{-\frac{2\alpha-n-1}{n+1}}$ such that  $L\sim
R^{\frac{2\alpha-n-1}{n+1}} $. We set
\[ G(\eta_1, \xi'', \eta_{n+1})= (ML)^{-\frac12} \sum_{k=1}^M\sum_{j=1}^L
\phi_P(\xi)e^{i(u_k\cdot\xi''+w_j \eta_{n+1})}.\] 
Hence it follows that
$|\widehat G|\gtrsim (ML)^{-\frac12}
R^\frac{n+1}2=R^{\frac{3n+3-2\alpha}4}$ on  $T = \bigcup_{k=1}^M
\bigcup_{j=1}^L  (P^*+ u_k+ w_j)$. We now consider an $\alpha$-dimensional measure
$d\mu=R^{n+1-\alpha} \chi_{T}\, dx$. 
Noting $ \int_T d\mu\sim 1$ and $\|G\|_2\sim R^{\frac{n+1}4}$, we get the  second condition in   \eqref{third_1} by letting $R\to \infty$.

\bibliographystyle{plain}
\bibliography{NA_10966}

\end{document}